\newtheorem{definition}{Definition}
\newtheorem{proposition}{Proposition}
\newtheorem{theorem}{Theorem}
\newtheorem{lemma}{Lemma}
\newlength{\wrapBottomVspace}
\newcommand{\vb}{{\mathbf{b}}}
\newcommand{\vg}{{\mathbf{g}}}
\newcommand{\vx}{{\mathbf{x}}}
\newcommand{\vy}{{\mathbf{y}}}
\newcommand{\vz}{{\mathbf{z}}}
\newcommand{\cE}{{\mathcal{E}}}
\newcommand{\cG}{{\mathcal{G}}}
\newcommand{\cO}{{\mathcal{O}}}
\newcommand{\RR}{\mathbb{R}}
\newcommand{\one}{\mathds{1}_n}
\newcommand{\pushsum}{{\sc Push-Sum}\xspace}
\newcommand{\pushdg}{{\sc Push-DIGing}\xspace}
\newcommand{\norm}[1]{\| #1 \|}
\newcommand{\ip}[1]{\left\langle#1\right\rangle}
\newtheorem{remark}{Remark}
\newtheorem{assumption}{Assumption}
\newtheorem{example}{Example}
\newcommand{\fulcon}{E_n}
\title{Decentralized Optimization over Time-Varying\\ Row-Stochastic Digraphs}
\author{Liyuan Liang\thanks{Equal contribution. Liyuan Liang is with School of Mathematics Science, Peking University ({liangliyuangg@gmail.com}), }\and Yilong Song\thanks{Equal contribution. Yilong Song is with Academy for Advanced Interdisciplinary Studies, Peking University  ({2301213059@pku.edu.cn})} .\and Kun Yuan\thanks{Corresponding author. Kun Yuan is with Center for Machine Learning Research, Peking University ({kunyuan@pku.edu.cn})}}
\begin{document}

\allowdisplaybreaks
\maketitle

\begin{abstract}
Decentralized optimization over directed graphs is essential for applications such as robotic swarms, sensor networks, and distributed learning. In many practical scenarios, the underlying network takes the form of a \emph{Time-Varying Broadcast Network} (TVBN), where only row-stochastic mixing matrices can be constructed due to the unavailability of out-degree information. Achieving exact convergence for decentralized optimization over TVBNs has remained a long-standing open problem, as the limiting distribution of time-varying row-stochastic mixing matrices depends on unpredictable future graph realizations, rendering standard bias-correction techniques infeasible. This paper develops the first decentralized optimization algorithm that achieves exact convergence using only time-varying row-stochastic matrices. We first propose PULM (Pull-with-Memory), a gossip protocol that achieves average consensus with exponential convergence by alternating between row-stochastic mixing and local adjustment steps. Building on PULM, we develop PULM-DGD, which converges to a stationary solution at a rate of $\mathcal{O}(\ln(T)/T)$ for smooth nonconvex objectives, where $T$ denotes the communication round. Our results significantly broaden the applicability of decentralized optimization to highly dynamic communication environments.

\end{abstract}
 


\section{Introduction}

This paper investigates decentralized optimization over a network of \( n \) nodes:
\begin{align}\label{eq(main):f definition}
    \min_{x\in\RR^d}\quad f(x):=\frac{1}{n}\sum_{i=1}^n f_i(x).
\end{align}
Each objective function~\( f_i \) is accessible only by node~\( i \) and is assumed to be smooth and potentially nonconvex. The local losses $f_i$ generally differ from each other, which poses  challenges to both the design and analysis of distributed algorithms.  

Decentralized optimization eliminates the need for a central server, thereby enhancing flexibility and enabling broad applicability in peer-to-peer communication scenarios. As a result, the design of decentralized optimization algorithms is inherently shaped by the underlying communication network among nodes, which is typically modeled as a graph or characterized by a mixing matrix. This study focuses on decentralized optimization over \emph{directed graphs}, or digraphs. Directed communication naturally models numerous real-world scenarios, including robotic swarms with asymmetric linkages~\cite{saber2003agreement,shorinwa2024distributed}, sensor networks with unidirectional message transmission~\cite{sang2010link,kar2008distributed}, and distributed deep learning systems where bandwidth asymmetry constrains communication~\cite{zhang2020network,liang2024communication}.

\subsection{Time-Varying Broadcast Network}\label{sec:TVBN}
The need for distributed optimization over directed graphs arises from complex communication constraints in real-world scenarios. Depending on the nature of these constraints, the underlying digraph may exhibit various challenging properties, including (1) not being symmetric, (2) having a time-varying topology, and (3) nodes lacking knowledge of their own out-degrees. In the most demanding communication settings, all three properties must be addressed simultaneously. We refer to a network exhibiting all these characteristics as a \emph{Time-Varying Broadcast Network} (TVBN). In many practical applications, the communication setting can only be accurately modeled as a TVBN: 
\begin{example}[Random Radio Broadcast]
In radio communications, transmitted information is received by any node within broadcast range, and the sender has no knowledge of which nodes have received the message. The network topology varies over time as nodes enter or exit the broadcast range.
\end{example}
\begin{example}[Byzantine Attack]
A Byzantine attack occurs when a subset of agents in the system behaves maliciously or transmits corrupted information to other nodes. Nodes receiving such malicious information may attempt to ignore or discard these unreliable signals, resulting in a TVBN.
\end{example}
\begin{example}[Packet Loss and Network Failure]
When packet loss or network failure occurs, receivers obtain incomplete or corrupted messages, creating uncertainty regarding the status of message delivery. Such scenarios can be modeled as a TVBN.
\end{example}

While decentralized optimization over time-varying digraphs has been extensively studied, all existing results, to our knowledge, require nodes to be aware of their out-degrees. Building upon the push-sum protocol, seminal works~\cite{nedic2014distributed,nedic2016stochastic,nedic2017achieving} investigate decentralized algorithms over time-varying column-stochastic networks. To ensure column-stochastic mixing matrices, each node must know its out-degree, which is not feasible in highly dynamic communication environments. Even if a node correctly determines its out-degree and scales the mixing weights accordingly for its neighbors, a network failure occurring after transmission (but before reception) may prevent some neighbors from receiving the message. In this case, the effective out-degree changes unexpectedly, and column-stochasticity can no longer be guaranteed; see Figure~\ref{fig:disconnection-plot} for an illustration. Another important line of work~\cite{saadatniaki2020decentralized,nedic2023ab,nguyen2023accelerated} studies push-pull or AB algorithms over time-varying digraphs. Since these methods alternately rely on row-stochastic and column-stochastic matrices, they also require out-degree knowledge. In fact, \textit{only algorithms relying purely on row-stochastic mixing matrices are feasible in TVBNs}, since each node only needs to know its in-degree, which is naturally immune to highly dynamic communication environments; see Figure~\ref{fig:disconnection-plot} for an illustration.
\begin{figure}[t]
    \centering
    \includegraphics[width=1\linewidth]{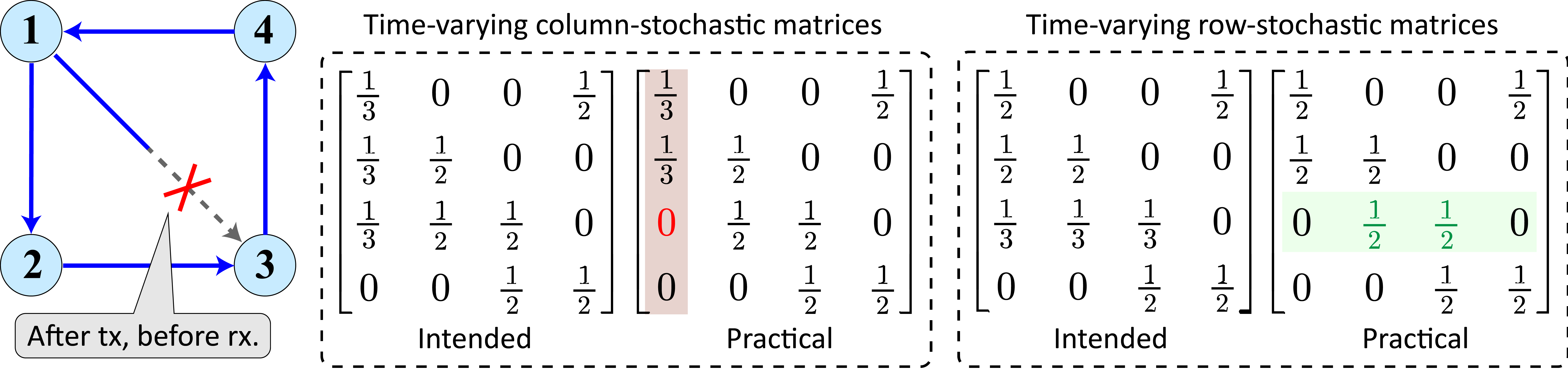}
    \caption{\small Left: A time-varying network with $n=4$ nodes experiencing unexpected network failure; ``tx'' and ``rx'' denote ``transmission'' and ``reception,'' respectively. Middle and right: The corresponding column- and row-stochastic mixing matrices. The network failure results in an incorrect column-stochastic matrix due to sudden changes in out-degree information. However, a correct row-stochastic matrix can still be constructed since it relies only on in-degree information (i.e., the actually received messages), which remains accessible.}
    \label{fig:disconnection-plot}
    \vspace{-5mm}
\end{figure}

\subsection{Open Questions and Challenges} \label{sec-open-question}
Decentralized optimization over time-varying column-stochastic  digraphs is now well understood, with foundational methods~\cite{nedic2014distributed,akbari2015distributed,nedic2016stochastic} developed over a decade ago and subsequently extended by~\cite{nedic2017achieving,saadatniaki2020decentralized}. However, developing algorithms that exactly solve problem~\eqref{eq(main):f definition} over purely time-varying row-stochastic digraphs (i.e., TVBNs) remains a long-standing open question. 

Since the out-degree is inaccessible in TVBNs, even developing decentralized algorithms to achieve average consensus is challenging. 
Assume each node $i\in[n]$ in the network initializes a vector $x_i$.
Traditional gossip algorithms~\cite{boyd2006randomized,aysal2009broadcast} can only achieve consensus among nodes in TVBNs, but not average consensus $n^{-1}\sum_{i=1}^n x_i$. The approach most related to our setting is that of~\cite{mai2016distributed}, which proposes a pre-correction strategy for \textit{static} row-stochastic mixing matrices. This strategy exploits the following property: for a nonnegative row-stochastic matrix $A$ with strong connectivity, there exists a unique  Perron vector $\pi_A$ (i.e., $\pi_A>0$, $\pi_A^\top A=\pi_A^\top$, $\one^\top\pi_A = 1$) satisfying
$$A^k\to \one\pi_A^\top, \quad k\to \infty.$$
Using this property, \cite{mai2016distributed}  initializes $z^{(0)}_i = (n[\pi_A]_i)^{-1} x_i$ at each node and iteratively propagate $z_i$ over the network (which is equivalent to left-multiplying by $A$):
\begin{align}\label{eq-pre-correction}
\vz^{(k)} = A^k \mathrm{Diag}(\one^\top A^k )^{-1}\vx \to \one \pi_A^\top \mathrm{Diag}(n\pi_A)^{-1}\vx = n^{-1}\one\one^\top \vx, \quad k\to \infty
\end{align}
where we let $\vz^{(k)} := [(z^{(k)}_1)^\top;(z^{(k)}_2)^\top;\cdots;(z^{(k)}_n)^\top]$ and $\vx := [x_1^\top;x_2^\top;\cdots;x_n^\top]$. 

However, the pre-correction strategy in \eqref{eq-pre-correction} is inherently an offline, two-stage method. To form the corrected initialization $z_i^{(0)}=(n[\pi_A]_i)^{-1}x_i$, each agent must know the Perron vector $\pi_A$  which is a \emph{global} property of the static mixing matrix $A$ and is not available locally in general; consequently, one must first run an auxiliary protocol to estimate $\pi_A$ by repeated multiplication of $A$, and only then restart and run the main gossip iteration $\vz^{(k)}=A^k\mathrm{Diag}(n\pi_A)^{-1}\vx$ to obtain exact averaging\footnote{Strategy~\eqref{eq-pre-correction} cannot be implemented in a decentralized online manner. Although one could track $V^{(k)} = A V^{(k-1)}$ with $V^{(0)} = I$, the update $\vz^{(k)} = V^{(k)} \vb$ (where $\vb = \mathrm{Diag}(\one^\top V^{(k)})^{-1}\vx$) is not decentralized: the matrix $V^{(k)}$ is typically much denser than $A$, requiring global communication.}. Such a two-stage offline method fundamentally fails in the TVBN setting with time-varying $A^{(k)}$. Even if the first stage could estimate a Perron vector $\pi_k=\pi(\{A^{(t)}\}_{t\le k})$, where the notation $\pi(\cdot)$ represents a certain mapping, this estimate is only valid for the network state during stage one. When the second stage begins---applying the correction $(n[\pi_k]_i)^{-1}x_i$ and running the gossip algorithm---the mixing matrices $A^{(k+1)},A^{(k+2)},\dots$ continue to evolve, making the stage-one estimate of $\pi_k$ immediately outdated. Since the Perron vector itself is non-stationary, there exists \emph{no single fixed} $\pi$ that can be estimated in stage one and remain valid throughout stage two.


\subsection{Related Work} In connected networks, the topology is characterized by a mixing matrix. For undirected networks, constructing symmetric and \textit{doubly stochastic} mixing matrices is straightforward. Early decentralized algorithms for such settings include decentralized gradient descent (DGD)~\cite{nedic2009distributed}, diffusion~\cite{chen2012diffusion}, and dual averaging~\cite{duchi2011dual}. These methods, however, exhibit bias under data heterogeneity~\cite{yuan2016convergence}. To address this limitation, advanced algorithms have been developed based on explicit bias-correction~\cite{shi2015extra,yuan2018exact,li2019decentralized} and gradient tracking~\cite{xu2015augmented,di2016next,nedic2017achieving,qu2017harnessing}. Extending these algorithms to time-varying undirected networks~\cite{nedic2017achieving,maros2018panda} is natural, as time-varying doubly stochastic mixing matrices readily preserve essential average consensus properties.

For directed networks, constructing doubly stochastic matrices is generally infeasible. When the out-degree of each node is accessible, column-stochastic mixing matrices can be constructed~\cite{nedic2014distributed,tsianos2012push}. Decentralized algorithms using such column-stochastic matrices are well studied. They leverage the push-sum technique~\cite{kempe2003gossip,tsianos2012push} to correct the bias in column-stochastic communications and achieve global averaging of variables or gradients. While the subgradient-push algorithm~\cite{nedic2014distributed,tsianos2012push} guarantees convergence to optimality, its sublinear rate persists even under strong convexity. Subsequent work—including EXTRA-push~\cite{zeng2017extrapush}, D-EXTRA~\cite{xi2017dextra}, ADD-OPT~\cite{xi2017add}, and Push-DIGing~\cite{nedic2017achieving}—has achieved faster convergence by explicitly mitigating heterogeneity. Recent work~\cite{liang2023towards} has established lower bounds and optimal algorithms for decentralized optimization over column-stochastic digraphs. Since the push-sum technique naturally accommodates time-varying column-stochastic digraphs, all the aforementioned algorithms readily extend to such settings.

When only in-degree information is available, one can construct row-stochastic mixing matrices. Diffusion~\cite{chen2012diffusion,sayed2014adaptive} was among the earliest decentralized algorithms using row-stochastic mixing matrices, but converges only to a Pareto-optimal solution rather than the global optimum. Just as push-sum underpins column-stochastic algorithms, the pull-diag gossip protocol~\cite{mai2016distributed} serves as an effective technique to correct the bias caused by row-stochastic communications. Reference~\cite{xi2018linear} first adapted distributed gradient descent to this setting. Subsequently, gradient tracking techniques were extended to the row-stochastic scenario by~\cite{li2019row,FROST-Xinran,liangachieving}, while momentum-based variants were developed in~\cite{ghaderyan2023fast,lu2020nesterov}. However, all of these algorithms are designed exclusively for static row-stochastic digraphs. To the best of our knowledge, no existing algorithm can achieve exact convergence to the solution of problem~\eqref{eq(main):f definition} using purely row-stochastic mixing matrices due to the challenges discussed in Section~\ref{sec-open-question}.

In digraphs where both in-degree and out-degree information are available, the Push-Pull/AB methods~\cite{pu2020push,xin2018linear,you2025stochastic,liang2025linear} can solve problem~\eqref{eq(main):f definition} by alternately using column-stochastic and row-stochastic mixing matrices~\cite{nedic2025ab,akgun2024projected}. These algorithms typically achieve faster convergence than methods relying solely on column- or row-stochastic matrices and can handle both static and time-varying scenarios. However, they require knowledge of the out-degree at each node, which is unavailable in TVBNs. 

\vspace{-3mm}
\subsection{Main Results} This paper develops the first  algorithm to achieve exact convergence for problem~\eqref{eq(main):f definition} using only time-varying row-stochastic mixing matrices, thereby making decentralized optimization feasible over TVBNs and significantly enhancing its robustness to highly dynamic environments. Our results are:
\vspace{1mm}
\begin{itemize}[leftmargin=3em]
    \item[C1.] \textbf{Effective average consensus protocol}. We propose \underline{\textbf{PUL}}L-with-\underline{\textbf{M}}emory \textbf{(PULM)}, a decentralized method to achieve average consensus over TVBNs with time-varying row-stochastic mixing matrices. By alternating between a standard row-stochastic gossip step and a local adjustment step, we theoretically prove that PULM converges to average consensus exponentially fast.

    \vspace{1mm}
    \item[C2.] \textbf{The first exactly converging algorithm}. Built upon PULM, we develop a decentralized gradient descent approach over TVBNs, termed \textbf{PULM-DGD}. For nonconvex and smooth optimization problems, we establish that PULM-DGD converges to a stationary solution at a rate of $\cO(\frac{\ln(T)}{T})$. To our best knowledge, PULM-DGD is the first method that achieves exact convergence using only time-varying row-stochastic mixing matrices. 
\end{itemize}

\vspace{1mm}
\textbf{Organization.} The remainder of this paper is organized as follows. Notation and assumptions are provided in Section~\ref{sec:ass}. In Section~\ref{sec:prelim}, we examine the mixing dynamics in TVBNs and derive our PULM approach for achieving distributed average consensus. Performing decentralized optimization through PULM is discussed in Section~\ref{sec:PWM-GD}, where we provide the main convergence theorems. We conduct numerical experiments to verify PULM and PULM-DGD in Section~\ref{sec:exp} and conclude in Section~\ref{sec:conclusion}.

\section{Notations and Assumptions} \label{sec:ass}
In this section, we declare necessary notations and assumptions.

\vspace{1mm}
\noindent\textbf{Notations.}  Let \(\one\) denote the vector of all-ones of \(n\) dimensions and \(I_n\in\mathbb{R}^{n\times n}\) the identity matrix. In the context of proofs, the notation \(\fulcon:=\frac{1}{n}\one\one^\top\) is introduced for simplicity.  
 We use \([n]\) to denote the set \(\{1, 2, \ldots, n\}\). For a given vector \(v\) or  a given matrix $X$, $\mathrm{Diag}(v),\mathrm{Diag(X)}$ signify the diagonal matrix whose diagonal elements are comprised of all the entries of \(v\), or the diagonal elements of given \(X\), respectively. We define \(n \times d\) matrices
\begin{align*}
\vx^{(k)} &\hspace{-0.5mm}:=\hspace{-0.5mm} [( {x}_1^{(k)})^\top\hspace{-0.5mm}; ( {x}_2^{(k)})^\top\hspace{-0.5mm}; \cdots; ( {x}_n^{(k)})^\top\hspace{-0.5mm}]  \\
\vg^{(k)}=\nabla F(\vx^{(k)}) &\hspace{-0.5mm}:=\hspace{-0.5mm} [\nabla f_1({  x}^{(k)}_1)^\top;
\cdots;\nabla f_n({  x}^{(k)}_n)^\top] 
\end{align*}
by stacking all local variables vertically. The upright bold symbols (e.g. $\vx,\vg$) always denote network-level quantities. For vectors or matrices, we use the symbols $\le$ and $\ge$ for element-wise comparison. We use $\norm{\cdot}$ for $\ell_2$ vector norm and $\norm{\cdot}_F$ for matrix Frobenius norm. We use $\norm{\cdot}_2$ for induced $\ell_2$ matrix norm, which means $\norm{A}_2:=\max_{\norm{v}=1} \norm{Av}$. We use $\max|A|$ to represent the maximum absolute value of all the elements of matrix \(A\). Unless otherwise specified, product signs for matrices always indicate consecutive left multiplication in order, i.e., \(\prod_{k=1}^K A^{(i)}:=A^{(K)}A^{(K-1)}\cdots A^{(2)}A^{(1)}\). When a directed graph $\cG$ is classified as strongly connected, it means there exists a directed path from $i$ to $j$ for any nodes $i,j$ in $\cG$. 

\vspace{1mm}
\noindent \textbf{Gossip communication.} When node $i$ collects information $x_j\in \mathbb{R}^d, j\in N_i^{\rm{in}}$ from its in-neighbors, it can mix these vectors using scalars $a_{ij}$, producing $x_i^{\text{new}} = \sum_{j\in N_i^{\rm{in}}} a_{ij}x_j$. This process is called gossip communication. Using the stacked notation $\vx = [x_1^\top; x_2^\top; \ldots; x_n^\top]$ and $A=[a_{ij}]_{n\times n}$, the update can be written as $\vx^{\text{new}} \leftarrow A\vx$, where $A$ is called the mixing matrix. When out-degrees are accessible, $A$ can be constructed as either column-stochastic or row-stochastic. When out-degrees are inaccessible, $A$ can only be row-stochastic.

\vspace{1mm}
\noindent \textbf{Assumptions.} The following assumptions are used throughout this paper.

\begin{assumption}[\scshape \small $\tilde{B}$-Strongly Connected Graph Sequence]\label{ass:tv graph}
    The time-varying directed graph sequence $\{\mathcal{G}^{(k)}=(\mathcal{V},\mathcal{E}^{(k)})\}_{k\ge 0}$ satisfies the following: there exists an integer $\tilde{B}>0$ such that for any $k\ge 0$, the $\tilde{B}$-step accumulated graph
    \[
\mathcal{G}_{k}^{k+\tilde{B}-1}:=\left(\mathcal{V},\bigcup_{l=k}^{k+\tilde{B}-1} \mathcal{E}^{(l)}\right)
    \]
    is strongly connected. Additionally, each graph $\mathcal{G}^{(k)}$ contains a self-loop at every node. 
\end{assumption}

\begin{assumption}[\scshape\small Rapidly Changing Broadcast Network]
\label{ass:broadcast} 
For each $k\ge 0$ and $i\in [n]$, node $i$ does not know its out-degree $d_i^{{\rm{out}}, (k)}$ in graph $\cG^{(k)}$. Additionally, for each $k\ge 0$, the mixing matrix generated from $\cG^{(k)}$ can only be used once. 
\end{assumption}
The single-use constraint on mixing matrices in Assumption~\ref{ass:broadcast} reflects the rapidly changing topology: by the time communication using $A^{(k)}$ completes, the network has already transitioned to $\cG^{(k+1)}$, making $A^{(k)}$ incompatible with the current network structure. This is typical in highly dynamic environments such as drone swarms. 

\begin{definition}[\scshape\small Compatible Mixing Matrices]\label{def:compatible}
A mixing matrix $A = [a_{ij}]_{n\times n}$ is compatible with graph $\cG$ if $a_{ij}> 0 \text{ when }  (j\to i)\in \cE, \text{ and } a_{ij}=0 \text{ otherwise}.$
Any compatible $A$ for $\cG$ satisfying Assumption~\ref{ass:broadcast} is row-stochastic, i.e., $A\one = \one$. 
\end{definition}

\begin{assumption}[\scshape\small Lower Bounded Entries]\label{ass:matrix}
Suppose that for each $k\ge 0$, we have a mixing matrix $A^{(k)}$ that is compatible with $\cG^{(k)}$. There exists a scalar $\tau>0$ such that all nonzero entries of $A^{(k)}$ satisfy $a^{(k)}_{ij} \ge \tau$ for all $k\ge 0$.
\end{assumption}
Assumption~\ref{ass:matrix} is naturally satisfied by setting $A^{(k)}_{ij} = 1/d_i^{\mathrm{in},(k)}$ and $\tau = 1/n$.
\begin{proposition}\label{prop:positive}
For the sequence of time-varying directed graphs coupled with compatible mixing matrices $\{\cG^{(k)}, A^{(k)}\}_{k\ge 0}$ satisfying Assumptions~\ref{ass:tv graph}--\ref{ass:matrix}, there exist an integer $0<B\le n\tilde{B}$ and a scalar $\eta\in [\tau^B,1)$ such that $\prod_{l=k}^{k+B-1} A^{(l)} \ge \eta, \forall k\ge 0$.
\end{proposition}
Proposition~\ref{prop:positive} establishes that, for any starting time $k$, the $B$-step product of mixing matrices $\prod_{l=k}^{k+B-1}A^{(l)}$ is entrywise lower bounded by a constant $\eta>0$. This property is central to the convergence analysis: it implies uniform mixing over every $B$-step window, ensuring that each node's information influences every other node with weight at least $\eta$, independent of $k$. The proof can be found in Appendix~\ref{sec:proof-prop-positive}. We next introduce our final assumption.
\begin{assumption}[\sc Smoothness]\label{ass:smooth}
All the local functions $f_i(\cdot)$ are lower bounded, and there exists a constant $L>0$ such that for all $i\in [n]$ and all ${x}, {y}\in \RR^d$,
\begin{align*}
    \left\|\nabla f_{i}({x})-\nabla f_{i}({y})\right\| \leq L\|{x}-{y}\|.
\end{align*}
\end{assumption}

\section{Achieving Average Consensus over TVBNs}\label{sec:prelim}
In this section, we analyze how information is mixed and propagated across TVBNs using time-varying row-stochastic mixing matrices only. To formalize this process, let each node $i$ initialize a vector $z_i^{(0)}=x_i$. The $k$-th communication round is governed by the row-stochastic mixing matrix $A^{(k)}$. At the beginning of the $k$-th round, we assume each node $i$ stores a vector $z_i^{(k)}$, which can be expressed as a linear combination of all initial vectors:
\begin{align}
\label{eq-mixing-dynamic}
z_i^{(k)} = \sum_{j=1}^n w_{ij}^{(k)}x_j,
\end{align}
where $\{w^{(k)}_{ij}\}$ are some weights to be determined later. Without loss of generality, we assume the initial vectors $\{x_i\}_{i=1}^n$ are independent, which ensures that the coefficients $\{w_{ij}^{(k)}\}$ are uniquely determined. We term process \eqref{eq-mixing-dynamic} the \textit{mixing dynamics}.

\subsection{Mixing Dynamics over TVBNs}
By collecting coefficients $w_{ij}^{(k)}$ in~\eqref{eq-mixing-dynamic}, we define the \emph{distribution matrix}
\(
W^{(k)} \coloneqq [w_{ij}^{(k)}]_{i,j=1}^n \in \mathbb{R}^{n\times n},
\)
which maps the initial vectors $\{x_j\}_{j=1}^n$ to the node states $\{z^{(k)}_j\}_{j=1}^n$ at round $k$. In particular, letting
$\vz^{(k)} \coloneqq \big[z_1^{(k)},\ldots,z_n^{(k)}\big]^\top$ and $\vx \coloneqq \big[x_1,\ldots,x_n\big]^\top$,
the mixing dynamics~\eqref{eq-mixing-dynamic} can be written compactly as $\vz^{(k)} = W^{(k)} \vx$. 
At initialization, each node stores its own vector $z_i^{(0)} := x_i$, hence $W^{(0)} = I_n$. Moreover, average consensus is achieved if $
W^{(k)} \to \tfrac{1}{n}\one\one^\top \text{as } k\to\infty.$
Using the mixing dynamics in \eqref{eq-mixing-dynamic}, the gossip update
$z_i^{(k+1)} = \sum_{j=1}^n a_{ij}^{(k)} z_j^{(k)}$ is equivalent to performing the following matrix update
\begin{equation}
\label{eq:A-gossip}
W^{(k+1)} = A^{(k)} W^{(k)},
\end{equation}
where $A^{(k)}=[a_{ij}^{(k)}]_{n\times n}$ is row-stochastic.
However, the simple gossip update \eqref{eq:A-gossip} cannot drive $W^{(k)}$ to $n^{-1}\one\one^\top$. Instead, it converges to a biased average:
\begin{proposition}[\sc Limiting Property]\label{prop:row-sto-linear-convergence}
    For the sequence of time-varying directed graphs coupled with compatible mixing matrices $\{\cG^{(k)}, A^{(k)}\}_{k\ge 0}$ satisfying Assumptions~\ref{ass:tv graph}--\ref{ass:matrix}, the infinite product \(\prod_{k=1}^\infty A^{(k)}\) converges, and the limit takes the form of \(\prod_{k=1}^\infty A^{(k)}=\one \pi^\top\), where \(\pi=\pi\big(\{A^{(k)}\}_{k=0}^\infty\big)\in\mathbb{R}^n\), which means that \(\pi\) is determined by the whole sequence \(\{A^{(k)}\}_{k=0}^\infty\). Moreover, the convergence occurs at an exponential rate:
    \begin{equation}
        \Bigg\|\prod_{i=0}^{k-1} A^{(i)}-\one \pi^\top\Bigg\|_F \le n^{3/2}\frac{1+\eta}{\eta(1-\eta)}\cdot\big(\sqrt[B]{1-\eta}\big)^k,
        \label{eq:A-linear-convergence}
    \end{equation}
    where $B$ and $\eta$ are defined in Proposition~\ref{prop:positive}.
\end{proposition}
\begin{proof}
See Appendix~\ref{sec-app-limit-property}. 
\end{proof}
While Proposition~\ref{prop:row-sto-linear-convergence} guarantees asymptotic consensus, the limit is a $\pi$-weighted average rather than the uniform average consensus $n^{-1}\sum_{i=1}^n x_i$. Moreover, the limit vector $\pi$ cannot be exactly determined by the finite initial sequence $\{A^{(l)}\}_{l=0}^{k-1},\forall k<\infty$. In fact, one can construct future sequence $\{A^{(l)}\}_{l=k}^\infty$ arbitrarily that yields any desired limit vector $\pi_k=\pi\big(\{A^{(l)}\}_{l=k}^\infty\big)$. The relation between the original limit and the limit induced by the tail sequence starting from the $k$-th matrix is given by
\[
\one \pi^\top =\prod_{l=0}^\infty A^{(l)}=\prod_{l=k}^\infty A^{(l)}\prod_{l=0}^{k-1}A^{(l)}=\one \pi_k^\top \prod_{l=0}^{k-1}A^{(l)},
\]
namely $\pi^\top = \pi^\top_k \prod_{l=0}^k A^{(l)}$. Given that $\pi_k$ is arbitrary, the intended limit vector $\pi$ is unpredictable if we only utilize the past observations of $\{A^{(l)}\}_{l=0}^{k-1}$. This poses a fundamental challenge for the two-stage offline pre-correction strategy \eqref{eq-pre-correction}. 

The following remark further clarifies why average consensus can be achieved easily through time-varying column-stochastic mixing matrices, yet remains challenging when using time-varying row-stochastic mixing matrices.

\begin{remark}[\sc Column-stochastic vs\ Row-stochastic matrices]\label{rmk:col-vs-row}
Average consensus is more easily achieved using column-stochastic mixing matrices. For a nonnegative column-stochastic matrix $B$ with strong connectivity, there exists a unique Perron vector $\pi_B$ (i.e., $\pi_B>0$, $B \pi_B=\pi_B$, $\one^\top\pi_B = 1$) satisfying $B^k\to \pi_B \one^\top \text{as } k\to \infty.$ The push-sum protocol~\cite{kempe2003gossip,tsianos2012push} achieves average consensus as follows
\begin{align}\label{eq-pre-correction-column}
\vz^{(k)} \hspace{-0.8mm}=\hspace{-0.8mm} \mathrm{Diag}(B^k \one)^{-1} B^k \vx \hspace{-0.8mm}\to\hspace{-0.8mm} \mathrm{Diag}(n\pi_B)^{-1} \pi_B \one^\top \vx\hspace{-0.8mm}=\hspace{-0.8mm} n^{-1}\one\one^\top \vx, \quad k\to \infty.
\end{align}
In contrast to row-stochastic strategy~\eqref{eq-pre-correction}, which applies \emph{pre-correction} $[\mathrm{Diag}(n\pi)]^{-1}$ prior to gossip and requires knowledge of the limit vector $\pi$ in advance, the column-stochastic approach~\eqref{eq-pre-correction-column} employs \emph{post-correction}: nodes first propagate information via gossip, then apply local corrections. Crucially, this approach does not require prior knowledge of the limit vector $\pi$, thereby enabling an online single-stage implementation that naturally extends to time-varying scenarios. Specifically, by updating $\vz^{(k)} = B^{(k)} \vz^{(k-1)}$ and $v^{(k)} = B^{(k)} v^{(k-1)}$ with $v^{(0)} = \one$, each node obtains $y_i^{(k)} = z_i^{(k)} / v_i^{(k)} \to n^{-1} \one^\top \vx$ as $k \to \infty$. This distinction between pre-correction and post-correction explains why decentralized optimization over time-varying column-stochastic digraphs is well understood, while time-varying row-stochastic digraphs remains an open problem.
\end{remark}



\begin{figure}[t]
    \centering
    \includegraphics[width=1\linewidth]{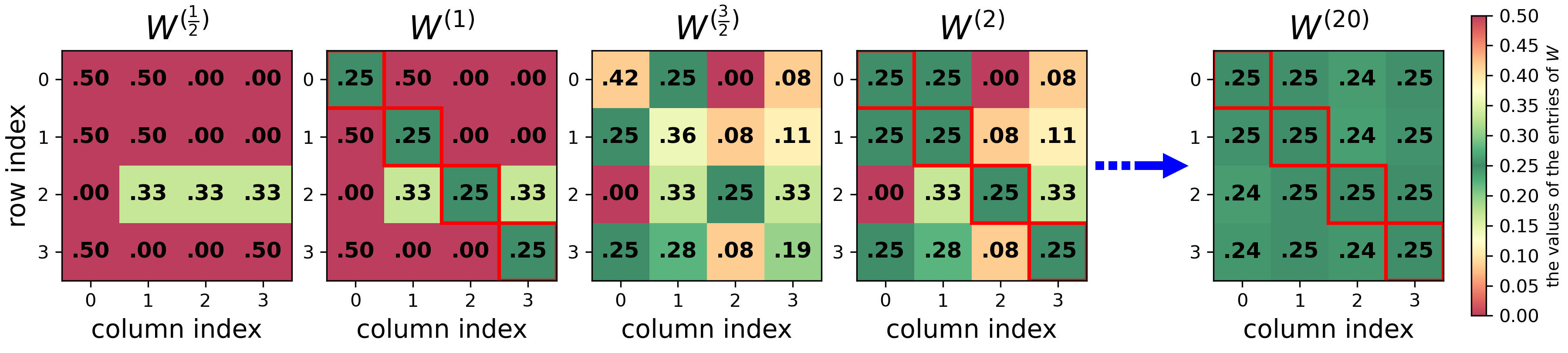}
    \caption{\small \textbf{Diffusion of the $1/n$ diagonal anchor under the adjust--gossip update.} The adjust step anchors each diagonal entry $w_{jj}$ at $1/n$; the gossip step then diffuses this value down column $j$, pulling off-diagonal entries toward $1/n$. Cell colors indicate distance to $1/n$ (greener means closer), showing how the anchored mass spreads from the diagonal throughout each column, driving $W^{(k)}$ toward average consensus.}
    \label{fig:adjust-gossip-vis}
    \vspace{-3mm}
\end{figure}

\subsection{Shifting the Limits towards Average Consensus}\label{sec:PULM-matrix}
Our method leverages a simple observation: since the limiting distribution $\pi$ depends on the entire sequence of mixing matrices, judicious modifications of the intermediate $W^{(k)}$ can steer $\pi$ towards any desired value. Specifically, before left-multiplying by $A^{(k)}$, we introduce an adjustment step designed to shift the limiting vector closer to the average  consensus. Through repeated adjustments, the process is progressively driven toward the desired average $n^{-1}\one$. This gossip-adjust process can be represented as: 
\[ \cdots \longrightarrow W^{(k)} \overset{A^{(k)}}{\longrightarrow} W^{(k+\frac{1}{2})}\overset{\text{adjust}}{\longrightarrow} W^{(k+1)} \overset{A^{(k+1)}}{\longrightarrow}W^{(k+1+\frac{1}{2})}\overset{\text{adjust}}{\longrightarrow} W^{(k+2)} {\longrightarrow} \cdots, \] 
where $\overset{A^{(k)}}{\longrightarrow}$ denotes the gossip step and $\overset{\text{adjust}}{\longrightarrow}$ denotes the adjustment step. Crucially, this adjustment must be communication-free and rely on locally available information. 

\begin{algorithm}[t]
\caption{Matrix-level procedure for driving $W^{(k)}$ to $n^{-1}\one\one^\top$}
\label{alg:3.1}
\begin{algorithmic}[1]
\State Initialize $W^{(0)} = I_n$.
\For{$k = 0, 1, 2, \ldots, K-1$}
    \State $W^{(k+\frac{1}{2})} = A^{(k)} W^{(k)}$; 
    \State Replace the diagonal entries of $W^{(k+\frac{1}{2})}$ by $\frac{1}{n}$, and name it $W^{(k+1)}$; 
\EndFor
\State \textbf{Output:} $W^{(K)}$
\end{algorithmic}
\end{algorithm}

\begin{wrapfigure}{r}{0.42\textwidth}
    \centering
    \vspace{-3mm}
    \includegraphics[width=0.35\textwidth]{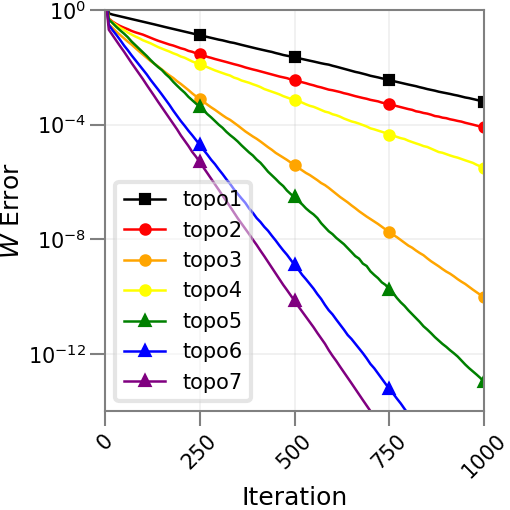}
    \vspace{-3mm}
    \caption{\small 
    Convergence of $W^{(k)}$. $W$ Error implies $\|W^{(k)}-n^{-1}\one\one^\top\|_F$. Details are in Appendix~\ref{app:performances-different-topologies}.
    }
    \label{fig:right-wrap}
    \vspace{\wrapBottomVspace}

\end{wrapfigure}

Algorithm~\ref{alg:3.1} summarizes our proposed procedure. The intuition behind why it drives $W^{(k)}$ toward $n^{-1}\one\one^\top$ is straightforward: each iteration alternates between a \emph{gossip} step, which computes $W^{(k+\frac{1}{2})} = A^{(k)} W^{(k)}$ through convex combinations of rows of $W$, and an \emph{adjust} step, which resets diagonal entries to $1/n$. Since gossip averages across rows (see \eqref{eq:A-gossip}),  entries within each column are repeatedly mixed, progressively converging to the same value (see Proposition~\ref{prop:row-sto-linear-convergence}). Since column $j$ always contains the anchored value $W^{(k+1)}_{jj} = 1/n$, this mixing pulls off-diagonal entries toward $1/n$---effectively diffusing the anchor throughout the column (see Figure~\ref{fig:adjust-gossip-vis}). Over successive iterations, all entries converge to $1/n$.
 We illustrate the performance of Algorithm~\ref{alg:3.1} with a simple numerical
 simulation, demonstrating that the procedure converges exponentially fast to average consensus 
over various time-varying networks using row-stochastic matrices (see Figure~\ref{fig:right-wrap}). 

\subsection{The PULM Algorithm}
\label{sec-pulm-method}
We define $D^{(k)} := \mathrm{Diag}(A^{(k)} W^{(k)}) - I_n$. The iteration of $W^{(k)}$ in Algorithm~\ref{alg:3.1} can be expressed as
\[
W^{(k+\frac{1}{2})} = A^{(k)}W^{(k)},\quad     D^{(k)} = \mathrm{Diag}(W^{(k+\frac{1}{2})})- n^{-1}I_n,\quad W^{(k+1)} = W^{(k+\frac{1}{2})}-D^{(k)}.
\]
If we let $\vz^{(k+1)} = W^{(k+1)}\vx$, then $\vz^{(k+1)} \to n^{-1}\one\one^\top \vx$ based on the discussion in Section~\ref{sec:PULM-matrix}. While the above procedure can achieve average consensus with time-varying row-stochastic mixing matrices, it is not implemented in a decentralized manner, since $W^{(k)}$ can be rather dense and may require global communication. To address this, we propose the following recursion of $\vz^{(k)}$: 
\begin{equation}
\label{eq-z-update}
\vz^{(k+\frac{1}{2})} = A^{(k)}\vz^{(k)},\quad     \vz^{(k+1)} = \vz^{(k+\frac{1}{2})} - D^{(k)}\vx.
\end{equation}
We now show that $\vz^{(k+1)} = W^{(k+1)}\vx$ using this recursion. Observe that
\[
\begin{aligned}
    \vz^{(k+1)} &= A^{(k)}\vz^{(k)}-D^{(k)}\vx = A^{(k)} \vz^{(k)} - (A^{(k)}W^{(k)}-W^{(k+1)})\vx\\
    &= A^{(k)}\vz^{(k)}-A^{(k)}W^{(k)}\vx + W^{(k+1)}\vx,
\end{aligned}
\]
which implies
\begin{equation}\label{eq:z=Wx}
\vz^{(k+1)}-W^{(k+1)}\vx = A^{(k)}(\vz^{(k)}-W^{(k)}\vx)=\prod_{i=0}^{k} A^{(i)}(\vz^{(0)}-W^{(0)}\vx)=0.
\end{equation}
where the last equation holds  because we choose $z_i^{(0)}=x_i$ and $W^{(0)}=I_n$. A decentralized implementation of recursion~\eqref{eq-z-update} is given by Algorithm~\ref{alg:PWM}. We name this algorithm PULM (Pull with Memory) because gossiping with a row-stochastic matrix is commonly referred to as a ``pull'' operation, and our method additionally requires each node to store and update its distribution vector $w_i^{(k)}$, which serves as a ``memory'' of the mixing process. 

\begin{algorithm}[t]
\caption{Pull with Memory {(Node-wise implementation of Algorithm~\ref{alg:3.1}})}
\label{alg:PWM}
\begin{algorithmic}[1]
\State \textbf{Input:} Vector $x_i\in\mathbb{R}^d$ at each node $i\in[n]$ to be averaged
\For{each node $i\in[n]$ in parallel}
    \State Initialize $z_i^{(0)}=x_i$ and $w_i^{(0)}=\bm{e}_i$, where $\bm{e}_i$ is the $i$-th column of $I_n$
    \For{$k = 0, 1, 2, \ldots, K-1$}
        \State \textbf{Gossip:} $z_i^{(k+\frac{1}{2})} = \sum_{j=1}^n a_{ij}^{(k)} z_j^{(k)}$ and $w_i^{(k+\frac{1}{2})} = \sum_{j=1}^n a_{ij}^{(k)} w_j^{(k)}$
        \State \textbf{Adjust:}  $d_i^{(k)} = [w_i^{(k+\frac{1}{2})}]_i - \frac{1}{n}$
        \State \textbf{Update:} $z_i^{(k+1)} = z_i^{(k+\frac{1}{2})} - d_i^{(k)} \cdot x_i$ and $w_i^{(k+1)} = w_i^{(k+\frac{1}{2})} - d_i^{(k)} \cdot \bm{e}_i$
    \EndFor
\EndFor
\State \textbf{Output:} $z_i^{(K)}$ at each node $i\in[n]$
\end{algorithmic}
\end{algorithm}

The following theorem establishes that \textsc{PULM} achieves monotonic and exponentially fast convergence.

\begin{theorem}\label{thm:exp convergence}
For the sequence of time-varying directed graphs coupled with compatible mixing matrices $\{\cG^{(k)}, A^{(k)}\}_{k\ge 0}$ satisfying Assumptions~\ref{ass:tv graph}--\ref{ass:matrix}, the following statements hold:
\begin{enumerate}[label=(\roman*)]
    \item In Algorithm~\ref{alg:3.1}, the sequence $\big\{\max_{i,j}\big|[W^{(k)}]_{ij}-n^{-1}\big|\big\}_{k\ge 0}$ is non-increasing.
    \item In both Algorithms~\ref{alg:3.1} and~\ref{alg:PWM},
    \begin{align}\label{eq-pulm-convergence-rate}
    \|W^{(K)}-n^{-1}\one\one^\top\|_F\le \frac{n}{1-\eta}(1-\eta)^{K/B}.
    \end{align}
    \item In Algorithm~\ref{alg:PWM},
    \[\|\vz^{(K)}-n^{-1}\one\one^\top\vx\|_F \le \frac{n}{1-\eta}(1-\eta)^{K/B}\|\vx\|_F.\]
\end{enumerate}
Here, the constants $B>0$ and $\eta\in(0,1)$ are defined in Proposition~\ref{prop:positive}.
\end{theorem}
\begin{proof}
See Appendix~\ref{app:proof-thm1}.
\end{proof}

\section{Decentralized Optimization over TVBNs}\label{sec:PWM-GD}
While PULM is effective for achieving average consensus with time-varying row-stochastic matrices, it cannot directly solve the optimization problem~\eqref{eq(main):f definition}. In this section, we extend PULM to address this setting.

\subsection{Main recursion} A natural extension of PULM to decentralized optimization alternates between local gradient updates and a sufficiently long mixing phase:
\begin{align}\label{eq:naive_pulm_dgd}
    \vx^{(k+1)} = \mathrm{PULM} \big(\vx^{(k)}-\gamma \vg^{(k)},\, R_k\big),
\end{align}
where $\mathrm{PULM}(\vx, R)$ denotes the output of Algorithm~\ref{alg:PWM} after $R$ iterations initialized at $\vx$, $\gamma > 0$ is the step size, and $\vg^{(k)}$ denotes the stacked local gradients. When $R_k$ is sufficiently large, the PULM stage effectively averages the post-gradient iterates, reducing~\eqref{eq:naive_pulm_dgd} to the standard local-update-then-average scheme analogous to FedAvg~\cite{pmlr-v54-mcmahan17a}. Consequently, one might reasonably expect~\eqref{eq:naive_pulm_dgd} to converge under standard assumptions.

Nevertheless, we do not adopt \eqref{eq:naive_pulm_dgd}. Instead, we propose the following alternative:
\begin{align}\label{eq:tvbn_split_update}
    \vx^{(k+1)} =
    \mathrm{Gossip}\big(\vx^{(k)}, R_k\big)
    \;-\;
    \gamma\,\mathrm{PULM}\big(\vg^{(k)}, R_k\big),
\end{align}
where
$
\mathrm{Gossip}(\vx^{(k)}, R_k)
\;:=\;
\big(\prod_{r=0}^{R_k-1} A^{(k,r)}\big)\,\vx^{(k)}
$ and $A^{(k,r)}$ denotes the row-stochastic mixing matrix at outer iteration $k$ and inner iteration $r$. That is, standard gossip is applied to the parameter vector, while PULM is applied to the gradient vector.

\begin{algorithm}[t]
\caption{PULM-based Decentralized Gradient Descent (PULM-DGD)}
\label{alg:PWM-GD}
\begin{algorithmic}[1]
\State \textbf{Input:} Step size $\gamma > 0$, inner iterations $\{R_k\}_{k\ge 0}$
\For{each node $i\in[n]$ in parallel}
    \State Initialize all variables to the same arbitrary value $x_i^{(0)} = x^{(0)}$
    \For{$k = 0, 1, 2, \ldots, K-1$}
        \State Compute local gradient: $g_i^{(k)} = \nabla f_i(x_i^{(k)})$
        \State Initialize $z_i^{(k,0)} = x_i^{(k)}- \gamma g_i^{(k)}$ and $w_i^{(k,0)} = \bm{e}_i$
        \For{$r=0,1,2,\ldots, R_k-1$} \Comment{{\small {\color{black}PULM inner loop}}}
            \State \textbf{Gossip:} $z_i^{(k,r+\frac{1}{2})} = \sum_{j=1}^n a_{ij}^{(k,r)} z_j^{(k,r)}$, $w_i^{(k,r+\frac{1}{2})} = \sum_{j=1}^n a_{ij}^{(k,r)} w_j^{(k,r)}$
            \State \textbf{Adjust:} $d_i^{(k,r)} = [w_i^{(k,r+\frac{1}{2})}]_i - \frac{1}{n}$
            \State \textbf{Update:} $z_i^{(k,r+1)} = z_i^{(k,r+\frac{1}{2})} +\gamma   d_i^{(k,r)} g_i^{(k)}$,  $w_i^{(k,r+1)} = w_i^{(k,r+\frac{1}{2})} - d_i^{(k,r)} \bm{e}_i$
        \EndFor
        \State $x_i^{(k+1)} = z_i^{(k,R_k)}$
    \EndFor
\EndFor
\State \textbf{Output:} $x_i^{(K)}$ at each node $i\in[n]$
\end{algorithmic}
\end{algorithm}

\vspace{1mm}
\subsection{Insight for parameter gossiping and gradient averaging}
This distinction between mixing strategies reflects their differing roles: parameter mixing needs only to drive the iterates toward consensus, whereas gradient mixing critically requires a globally averaging mechanism to eliminate the non-uniform bias.

First, for the parameter trajectory, average consensus is unnecessary: we only need agents to agree on the same iterate, i.e., $\vx^{(k)} \approx \one (x^{(k)})^\top$.
Naive Gossip directly contracts toward the consensus subspace under standard connectivity assumptions, and the weighted average it produces is acceptable for parameters.
In contrast, PULM corrects this weighting bias to recover the exact global average—essential for gradients to drive unbiased descent. Moreover, for a fixed communication budget $R_k$, naive Gossip is more efficient, as PULM's correction step slows disagreement contraction.

Second, Gossip preserves the consensus subspace exactly per iteration, whereas PULM does not necessarily preserve it at intermediate iterates due to its internal correction mechanism.
Specifically, if $\vx^{(k)}=\one x^\top$, then $(\prod_{r=0}^{R_k-1}A^{(k,r)})\one x^\top = \one x^\top
\equiv \vx^{(k)}$, 
where we use $A^{(k,r)}\one=\one$.
In contrast, $\mathrm{PULM}(\cdot,R_k)$ may perturb a consensual input before converging asymptotically to the average. Hence,
$\mathrm{PULM}(\vx^{(k)}, R_k) \neq \vx^{(k)}$ even when $\vx^{(k)}$ is consensual.
Moreover, in our setting, $\vx^{(0)}$ is initialized as consensual, and each update increment $\gamma \vg^{(k)}$ is small due to the stepsize choice. Consequently, the iterates $\vx^{(k)}$ are typically already close to consensus at each stage, making it desirable for the communication operator to preserve this existing alignment.
This invariance property makes Gossip the natural choice for parameters when already-aligned models should remain aligned throughout the communication phase.

\subsection{PULM-DGD Method} The main recursion~\eqref{eq:tvbn_split_update} involves $R_k$ inner iterations. We now describe how to implement these inner iterations in a fully decentralized manner. Initialize $\vz^{(k,0)} = \vx^{(k)} - \gamma \vg^{(k)}$, we first update $W^{(k,r+1)}$ as in Section~\ref{sec-pulm-method}:
\[
W^{(k,r+\frac{1}{2})} \hspace{-0.8mm}=\hspace{-0.8mm} A^{(k,r)}W^{(k,r)},    D^{(k,r)} \hspace{-0.8mm}=\hspace{-0.8mm} \mathrm{Diag}(W^{(k,r+\frac{1}{2})})- n^{-1}I_n, W^{(k,r+1)} \hspace{-0.8mm}=\hspace{-0.8mm} W^{(k,r+\frac{1}{2})}-D^{(k,r)}
\]
for $r=0,1,\cdots, R_k-1$. Next, we propose PULM-DGD wihch updates $\vz^{(k,r+1)}$ as: 
\begin{equation}
\vz^{(k,r+1)} = A^{(k,r)}\vz^{(k,r)}+\gamma D^{(k,r)}\vg^{(k)}. 
\end{equation}
After $R_k$ inner iterations, we set $\vx^{(k+1)} = \vz^{(k,R_k)}$. We now show that the above procedure is equivalent to the main recursion~\eqref{eq:tvbn_split_update}. Observe that
\[
\begin{aligned}
    \vz^{(k,r+1)} &= A^{(k,r)}\vz^{(k,r)} + \gamma \bigl(A^{(k,r)}W^{(k,r)} - W^{(k,r+1)}\bigr)\vg^{(k)} \\
    &= A^{(k,r)}\vz^{(k,r)} + \gamma A^{(k,r)}W^{(k,r)} \vg^{(k)} - \gamma W^{(k,r+1)} \vg^{(k)},
\end{aligned}
\]
which implies
\[
\vz^{(k,r+1)} + \gamma W^{(k,r+1)}\vg^{(k)} = A^{(k,r)} \bigl(\vz^{(k,r)} + \gamma W^{(k,r)}\vg^{(k)}\bigr).
\]
Applying this relation recursively, we obtain
\[
\vz^{(k,R_k)} + \gamma W^{(k,R_k)} \vg^{(k)} = \prod_{r=0}^{R_k-1} A^{(k,r)} \bigl(\vz^{(k,0)} + \gamma W^{(k,0)}\vg^{(k)}\bigr) = \prod_{r=0}^{R_k-1} A^{(k,r)} \vx^{(k)},
\]
where the last equality follows from $W^{(k,0)} = I_n$ and $\vz^{(k,0)} = \vx^{(k)} - \gamma \vg^{(k)}$. Thus, 
\begin{equation}\label{eq:PULM-DGD-iter}
    \vx^{(k+1)} = \vz^{(k,R_k)} = \prod_{r=0}^{R_k-1} A^{(k,r)} \vx^{(k)} - \gamma W^{(k,R_k)} \vg^{(k)},
\end{equation}
which is precisely the main recursion~\eqref{eq:tvbn_split_update}. A decentralized implementation of PULM-DGD is presented in Algorithm~\ref{alg:PWM-GD}.


\subsection{Convergence Analysis} 

To facilitate the convergence analysis, we introduce the following notation. Let $\bar{g}^{(k)} := \frac{1}{n}\sum_{i=1}^n g_i^{(k)}$ and $\bar{x}^{(k)} := \frac{1}{n}\sum_{i=1}^n x_i^{(k)}$ denote the average gradient and average iterate, respectively. Let $\Delta_x^{(k)} := \vx^{(k)} - \fulcon \vx^{(k)}$ denote the consensus error matrix. For notational convenience, we define

\[
C_A := n^{3/2}\eta^{-1} \cdot \frac{1+\eta}{1-\eta}, \qquad C_W := \frac{n}{1-\eta}, \qquad \beta := \sqrt[B]{1-\eta},
\]
which appear in~\eqref{eq:A-linear-convergence} and~\eqref{eq-pulm-convergence-rate}, respectively. We  denote the product of row-stochastic matrices $\prod_{r=0}^{R_k-1} A^{(k,r)}$ by $A^{(k)}$, and write $W^{(k)}$ for $W^{(k,R_k)}$. Finally, when there is no ambiguity, a row vector resulting from vector-matrix multiplication (e.g., $\one^\top \vx^{(k)}$) is treated as a column vector when added to or subtracted from other column vectors.

As is standard in the analysis of decentralized gradient descent, the key step is to establish a descent property with respect to the consensus quantities $\bar{x}^{(k)}$ and $\bar{g}^{(k)}$. This is formalized in Lemma~\ref{lemma:descent}.
\begin{lemma}\label{lemma:descent}
    Suppose Assumptions~\ref{ass:tv graph}--\ref{ass:smooth} hold for Algorithm~\ref{alg:PWM-GD}. When \(\gamma\le \frac{1}{6L}\) and \(R_k\ge \ln(C_W)/\ln(1/\beta)\), for any \(k\ge 0\), we have
    \begin{equation}
    \begin{aligned}
        \frac{\gamma}{4}\|\bar{g}^{(k)}\|^2+\frac{\gamma}{6}\|\nabla f(\bar{x}^{(k)})\|^2 &\le (1+8\gamma LC_W^2\beta^{2R_k})\big(f(\bar{x}^{(k)})-f^*\big)\\
        &\quad -\big(f(\bar{x}^{(k+1)})-f^*\big)+\frac{2}{\gamma}\|\Delta_x^{(k)}\|_F^2,
    \end{aligned}
    \label{eq:descent-lemma}
    \end{equation}
    where \(f^*:=n^{-1}\sum_{i=1}^n\inf_{x\in\mathbb{R}^d}f_i(x)\).
\end{lemma}
\begin{proof}
    See Appendix~\ref{app:proof-descent}.
\end{proof}

Beyond the optimality of $\bar{x}^{(k)}$, achieving consensus is equally important. To this end, we analyze the evolution of the consensus error matrix $\Delta_x^{(k)}$ in Lemma~\ref{lemma:consensus-iteration}.
\begin{lemma}\label{lemma:consensus-iteration}
    Suppose Assumptions~\ref{ass:tv graph}--\ref{ass:smooth} hold for Algorithm~\ref{alg:PWM-GD}. When $\gamma\le \frac{1}{6L}$ and $R_k\ge \max \{ \ln(C_W)/\ln(1/\beta)$, $ \ln(3C_A)/\ln(1/\beta)\}$, we have
    \begin{equation}
        \|\Delta_x^{(k+1)}\|_F^2\le \frac{1}{3}\|\Delta_x^{(k)}\|_F^2+8n\gamma^2 L C_W^2\beta^{2R_k}\big(f(\bar{x}^{(k)})-f^*\big),
    \label{eq:lemma-consensus-iteration}
    \end{equation}
    where $f^*$ is defined the same as in Lemma~\ref{lemma:descent}.
\end{lemma}
\begin{proof}
    See Appendix~\ref{app:proof-consensus}.
\end{proof}

Next, multiplying both sides of~\eqref{eq:lemma-consensus-iteration} by $\frac{12}{\gamma}$ and adding the result to~\eqref{eq:descent-lemma} yields
\begin{equation}
\begin{aligned}
    &\quad \frac{\gamma}{4}\norm{\bar{g}^{(k)}}^2+\frac{\gamma}{6}\norm{\nabla f(\bar{x}^{(k)})}^2 + \frac{12}{\gamma} \|\Delta_x^{(k+1)}\|_F^2 \\
    &\le \big(1+ (8+96n)\gamma L C_W^2\beta^{2R_k}\big)(f(\bar{x}^{(k)})-f^*)-(f(\bar{x}^{(k+1)})-f^*)+\frac{6}{\gamma}\norm{\Delta_x^{(k)}}_F^2.
\end{aligned}
\label{eq(app):consensus-1}
\end{equation}
Since the coefficients of $f(\bar{x}^{(k)}) - f^*$ and $f(\bar{x}^{(k+1)}) - f^*$ differ, we require the following lemma to account for the resulting discrepancy.
\begin{lemma}\label{lemma:absorbing-2}
    Let $\{S_k\}_{k=0}^{K-1}$, $\{D_k\}_{k=0}^{K-1}$, and $\{F_k\}_{k=0}^{K-1}$ be three sequences of positive values, and let $c > 0$ be a constant. If the inequality
    \begin{equation}\label{eq:absorb-provided}
        S_k \le \left(1 + \frac{c}{(k+1)^2}\right) D_k - D_{k+1} + F_k
    \end{equation}
    holds for all $0 \le k \le K-1$, then
\begin{equation}\label{eq:absorbing-4}
        \sum_{k=0}^{K-1} S_k \le e^{2c} D_0 + \frac{e^{2c}}{1+c} \sum_{k=0}^{K-1} F_k.
    \end{equation}
\end{lemma}
\begin{proof}
    See Appendix~\ref{app:proof-absorbing-2}.
\end{proof}

To apply Lemma~\ref{lemma:absorbing-2}, we require $(8 + 96n)\gamma L C_W^2 \beta^{2R_k} \le c/(1+k)^2$. A sufficient condition is $\gamma \le (16 \cdot (1 + 12n) L C_W^2)^{-1}$ and $R_k \ge \ln(1+k) / \ln(1/\beta)$. With this choice, we have $c = \frac{1}{2}$ in~\eqref{eq:absorb-provided} and $e^{2c} < 3$. Setting
$
S_k = \frac{\gamma}{4}\|\bar{g}^{(k)}\|^2 + \frac{\gamma}{6}\|\nabla f(\bar{x}^{(k)})\|^2 + \frac{12}{\gamma}\|\Delta_x^{(k+1)}\|_F^2,  D_k = f(\bar{x}^{(k)}) - f^*, F_k = \frac{6}{\gamma}\|\Delta_x^{(k)}\|_F^2,
$
and applying~\eqref{eq:absorbing-4}, we obtain
\begin{equation}
\begin{aligned}
&\quad \frac{\gamma}{4}\sum_{k=0}^{K-1}\|\bar{g}^{(k)}\|^2+\frac{\gamma}{6}\sum_{k=0}^{K-1}\|\nabla f(\bar{x}^{(k)})\|^2 +\frac{12}{\gamma}\sum_{k=1}^{K}\|\Delta_x^{(k)}\|_F^2\\
&\le 3(f(\bar{x}^{(0)})-f^*)+\frac{12}{\gamma}\sum_{k=0}^{K-1}\|\Delta_x^{(k)}\|_F^2.
\end{aligned}
\label{eq:ready-to-prove-thm}
\end{equation}
Note that Algorithm~\ref{alg:PWM-GD} is initialized at consensus, so $\Delta_x^{(0)} = 0$. Substituting the definitions of $C_A$, $C_W$, and $\beta$ into the above bounds, we obtain our main result:
\begin{theorem}[\sc Convergence Property]
    Suppose Assumptions~\ref{ass:tv graph}--\ref{ass:smooth} hold for Algorithm~\ref{alg:PWM-GD}. If we choose the step size 
    $
    \gamma \le \frac{(1-\eta)^2}{16n^2(1+12n)L},
    $
    and the inner iteration rounds
    \[
    \begin{aligned}
    R_k & \ge\max\{\ln(3C_A)/\ln(1/\beta),\ln (C_W)/\ln(1/\beta),\ln(1+k)/\ln(1/\beta)\}\\
    &= \max\Big\{B+\frac{B}{-\ln(1-\eta)}\big(\frac{3}{2}\ln n +\ln(3+\frac{3}{\eta})\big), \frac{B\ln(1+k)}{-\ln(1-\eta)} \Big\},
    \end{aligned}
    \]
    where $\eta$ and $B$ are defined in Proposition~\ref{prop:positive}, we have
    \begin{align}
    \frac{1}{K}\sum_{k=0}^{K-1}\norm{\nabla f(\bar{x}^{(k)})}^2\le \frac{18(f(\bar{x}^{(0)})-f^*)}{\gamma K},
    \label{ieq:main}
    \end{align}
    where \(f^*:=n^{-1}\sum_{i=1}^n\inf_{x\in\mathbb{R}^d}f_i(x)\).
The total number of communication rounds is
\[\sum_{k=0}^{K-1} R_k\le \max\Big\{\sum_{k=0}^{K-1} \frac{\ln(k+1)}{\ln(1/\beta)},\sum_{k=0}^{K-1} \frac{\ln(3C_A)}{\ln(1/\beta)}\Big\}\le \frac{\max\{K\ln(K),K\ln(3C_A)\}}{\ln(1/\beta)}. \]
\label{thm:main}
\end{theorem}

\begin{proof}
    See Appendix~\ref{app:proof-main-thm}.
\end{proof}

\begin{remark}[\sc Convergence in Communication Rounds] 
In Theorem~\ref{thm:main}, we define the total number of communication rounds as $T=\sum_{k=0}^{K-1}R_k$ and obtain $K = \cO(T/\ln(T))$. Substituting this into~\eqref{ieq:main} yields: 
\begin{align}
    \frac{1}{K}\sum_{k=0}^{K-1}\norm{\nabla f(\bar{x}^{(k)})}^2 = \mathcal{O}\left(\frac{\ln(T)}{T}\right).
\end{align}
This complexity is nearly the same order as decentralized optimization over doubly-stochastic~\cite{alghunaim2022unified} or column-stochastic mixing matrices~\cite{liang2023towards}, except for an additional $\ln(T)$ factor. To the best of our knowledge, PULM-DGD is the first approach that achieves convergence for decentralized optimization over TVBNs with highly dynamic row-stochastic mixing matrices.
\end{remark}

\section{Experiments}\label{sec:exp}

To verify the effectiveness of Algorithms~\ref{alg:PWM} and~\ref{alg:PWM-GD}, we conducted extensive experiments across various problem types and time-varying network topologies. We considered three problem settings: (i) average consensus, (ii) logistic regression with non-convex regularization, and (iii) neural network training for MNIST and CIFAR-10 classification. In this section, we present representative experimental results; additional experiments, details on problem settings, and network topology selection strategies can be found in Appendix~\ref{sec:exp-app}.

\subsection{Comparison with Push-Family Algorithms}\label{sec:push-comp}
\pushdg~\cite{nedic2017achieving} and its consensus counterpart \pushsum~\cite{kempe2003gossip} are among the few methods that explicitly address time-varying directed networks. However, as shown in Section~\ref{sec:TVBN}, push-based schemes require knowledge of out-degrees to construct column-stochastic mixing weights. Under packet loss, the \emph{effective} out-degree becomes random and time-varying: although a node transmits to all out-neighbors, only a subset of messages is successfully received. Since the nominal out-degree does not reflect actual deliveries, the resulting mixing matrix may not be column-stochastic, causing Push-Family to become unstable and potentially fail to converge. In contrast, PULM-Family remains robust (Figure~\ref{fig:compare_pwm_and_dig}).

\begin{figure}[t]
    \centering
    \includegraphics[width=1\linewidth]{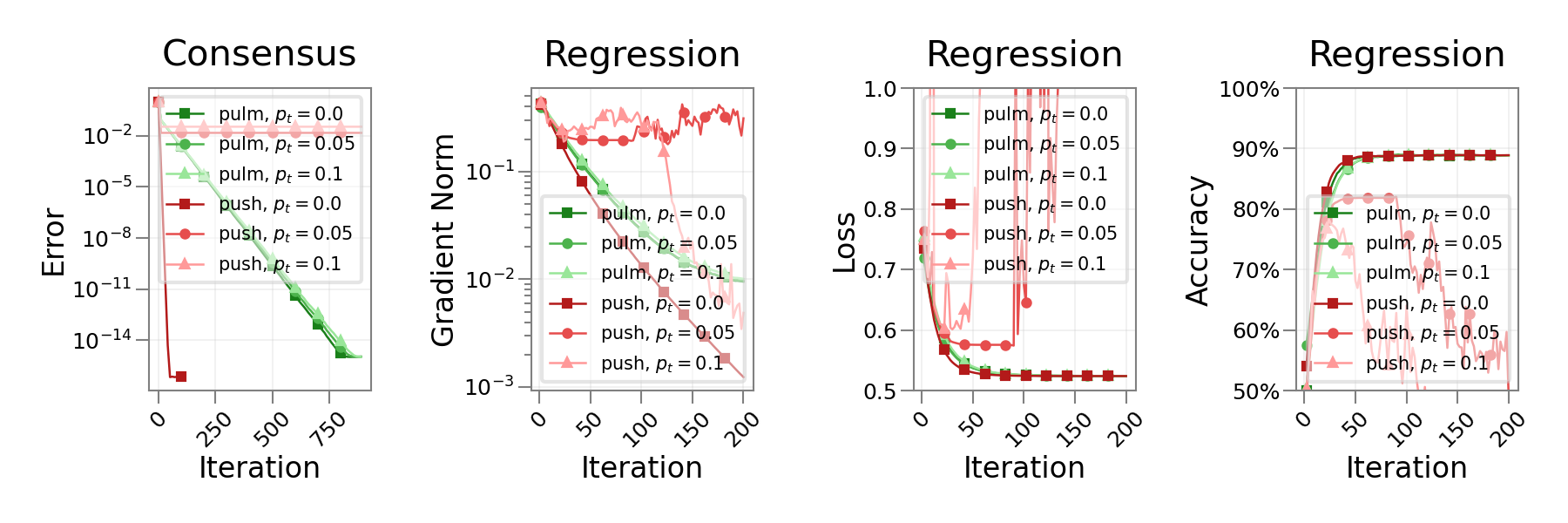}
    \vspace{-10mm}
    \caption{\small Performance comparison of push-family and PULM-family algorithms under packet loss on time-varying directed networks. In consensus, \texttt{push} denotes push-sum and \texttt{pulm} denotes PULM; in regression, \texttt{push} denotes push-DIGing and \texttt{pulm} denotes PULM-DGD. The packet-loss probability is $p_t$. See Appendix~\ref{sec:exp-app} for full experimental settings.}
    \label{fig:compare_pwm_and_dig}
    \vspace{-5mm}
\end{figure}

\begin{figure}[t]
    \centering
    \includegraphics[width=1\linewidth]{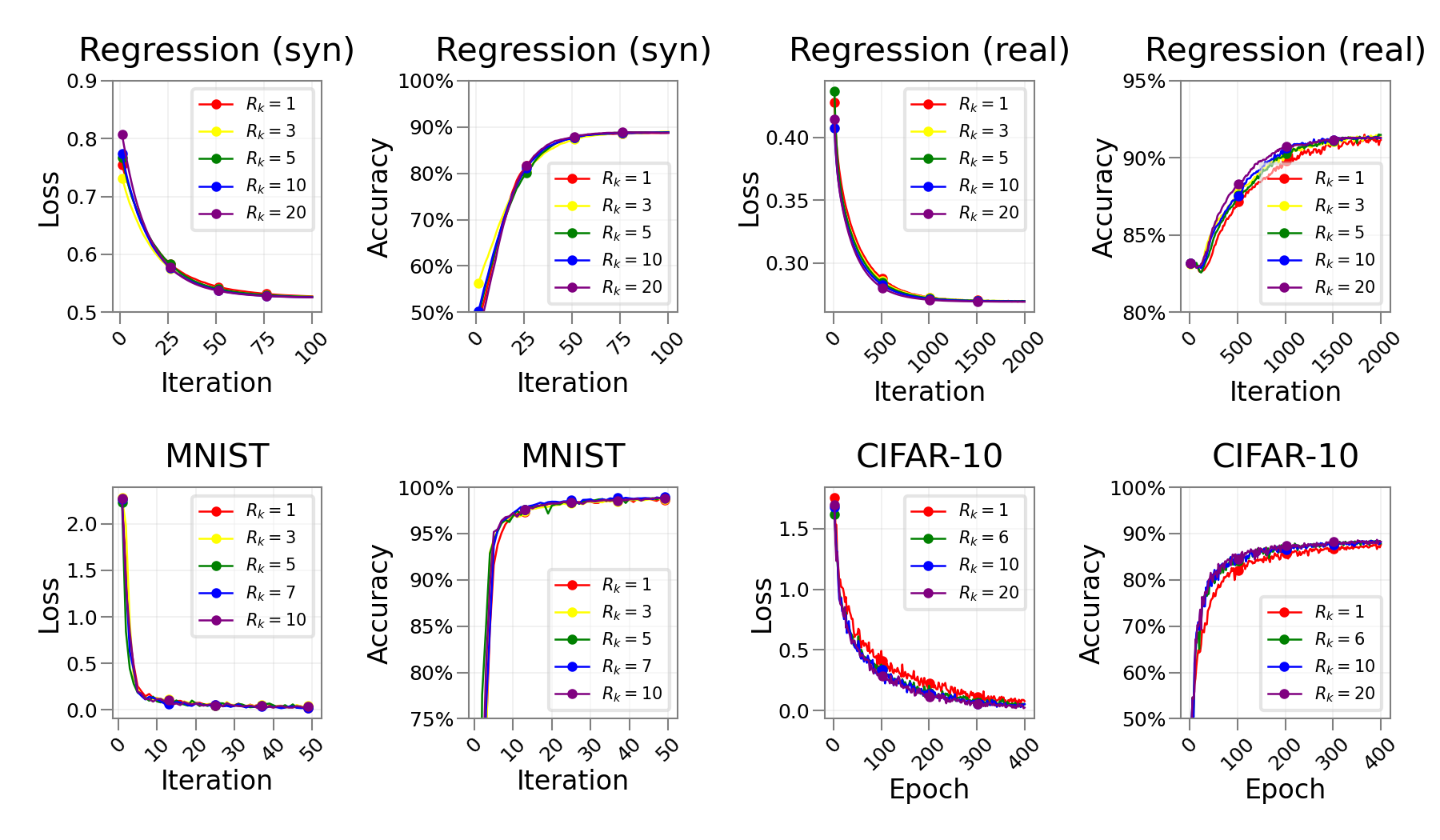}
    \vspace{-10mm}
    \caption{\small Effect of the inner communication rounds $R_k$ on the performance of Algorithm~\ref{alg:PWM-GD}. Top row: logistic regression on synthetic data (loss and accuracy) and on real data (loss and accuracy) for $R_k\in\{1,3,5,10,20\}$. Bottom row: MNIST training (loss and accuracy) for $R_k\in\{1,3,5,7,10\}$, and CIFAR-10 training (loss and accuracy) for $R_k\in\{1,6,10,20\}$. Larger $R_k$ increases communication per outer iteration but yields only marginal changes in convergence behavior. See Appendix~\ref{sec:detail-diff-inner} for experimental details.}
    \label{fig:diff-inner}
    \vspace{\wrapBottomVspace}
\end{figure}

The first plot of Figure~\ref{fig:compare_pwm_and_dig} shows average consensus performance with $N=20$ nodes and random $d=1024$-dimensional data; the communication graph is resampled at each iteration with sparsity $s=0.2$. The remaining panels present regression results on synthetic data: $10$ nodes each hold $1000$ samples with $30$ features, the heterogeneity parameter is $\sigma_h=0.1$, and the stepsize is $\gamma=0.1$. Communication uses a fixed strongly connected graph with sparsity $0.3$, where links disconnect with probability $p_d=0.4$ (These disconnections are reflected in the out-degrees, which Push-Family algorithms are able to capture. The role of $p_d$ is to generate time-varying graphs based on a fixed graph). Inner communication rounds are fixed at $R_k \equiv 10$. Another more crucial parameter is the packet loss probability $p_t$ between transmission and reception, which is set as $0.0,0.05$ and $0.1$ for both consensus and regression problems. In all cases, push-based methods (push-sum and push-DIGing) are highly sensitive to packet loss—even $p_t=0.05$ prevents convergence—whereas PULM and PULM-DGD remain robust, as they do not rely on out-degree information for normalization.


\subsection{Effect of Different Inner Communication Rounds $R_k$}\label{sec:diff-inner} The inner communication rounds $R_k$ control the amount of mixing (i.e., consensus refinement) performed per outer iteration and thus directly determine the communication cost. Theorem~\ref{thm:main} provides a sufficient choice, requiring $R_k$ to grow on the order of $\ln(k)$ to guarantee the stated convergence bound. In practice, however, we observe that constant values of $R_k$ often yield nearly identical optimization performance, suggesting that the theoretical schedule is conservative.

Figure~\ref{fig:diff-inner} confirms this empirically: varying $R_k$ primarily affects communication cost rather than final performance. Across logistic regression, MNIST, and CIFAR-10, the loss and accuracy curves for different $R_k$ values nearly overlap, indicating that Algorithm~\ref{alg:PWM-GD} is insensitive to this hyperparameter. While larger $R_k$ can slightly improve early-stage convergence by enhancing mixing, the benefit quickly saturates. Hence, a small constant $R_k$ is typically sufficient in practice; indeed, even $R_k = 1$ achieves satisfactory performance across all experiments.

\section{Conclusion}\label{sec:conclusion}

In this paper, we studied decentralized optimization over time-varying broadcast networks (TVBNs). We showed that the limiting Perron vector induced by the mixing process can be future-dependent, rendering existing pre-correction strategies inapplicable. To address this challenge, we proposed PULM, a row-stochastic, broadcast-compatible protocol that achieves average consensus at an exponential rate. We further integrated PULM with distributed gradient descent to obtain PULM-DGD, where model parameters are propagated via standard broadcast while gradients are mixed using PULM. Under standard assumptions for nonconvex objectives, PULM-DGD achieves a convergence rate of $\cO(\ln(T)/T)$ in communication rounds, nearly matching methods using doubly-stochastic or column-stochastic mixing. Future work includes developing single-loop algorithms for decentralized optimization over row-stochastic digraphs.

\bibliographystyle{unsrt}
\bibliography{references}
\appendix

\setcounter{table}{0}   
\setcounter{figure}{0}
\renewcommand{\thetable}{A\arabic{table}}
\renewcommand{\thefigure}{A\arabic{figure}}

\section{Proof Details and Supplementary Lemmata}

\subsection{Proof of Proposition~\ref{prop:positive}}\label{sec:proof-prop-positive}
\begin{proof}
We introduce the following notation: (i) $v_1 \rightarrow v_2$ indicates that there exists an edge from $v_1$ to $v_2$ in a given graph $\mathcal{G}$ (where $\mathcal{G}$ may be an accumulated graph as defined in Assumption~\ref{ass:tv graph}); (ii) $v_1 \xrightarrow{t} v_2$ indicates that $v_1$ sends information to $v_2$ at time $t$, i.e., $(v_1, v_2) \in \mathcal{E}^{(t)}$.

It suffices to show that for any starting time $k$, any source node $s \in \mathcal{V}$, and any destination node $e \in \mathcal{V}$, there exists a finite-length path from $s$ to $e$ whose length is independent of $k$. By Assumption~\ref{ass:tv graph}, self-loops exist at every node. Let $B$ denote the maximum path length among all source-destination pairs, and extend each shorter path to length $B$ by appending self-loops. Applying the Chapman--Kolmogorov equation, we obtain

\[
\bigg[\prod_{l=k}^{k+B-1}A^{(l)}\bigg]_{e,s}\ge \big[A^{(k+B-1)}\big]_{e,v_{B-1}}\cdots\big[A^{(k+1)}\big]_{v_2,v_{1}}\big[A^{(k)}\big]_{v_1,s}\ge \tau^B,
\]
where $s\xrightarrow{k}v_1\xrightarrow{k+1}v_2\cdots\xrightarrow{k+B-2}v_{B-1}\xrightarrow{k+B-1}e$ is a given state transition trajectory with length $B$, and the second inequality is from Assumption~\ref{ass:matrix}.

Since directed graphs always contain self-loops, the case $s = e$ is trivial. For $s \neq e$, we define $\mathcal{V}_{\ominus}$ as the set of nodes that have already been visited, initialized as $\mathcal{V}_{\ominus} = \{s\}$, and expand this set during the search process.
The searching process can be broken down into the following steps.

\textbf{Step 1.} Use Assumption~\ref{ass:tv graph}, during $\tilde{B}$ time intervals, we can find at least one path from $s$ to $e$, from which we pick out a shortest path $s\to v_1\to \dots\to e$. We add $v_1$ into $\mathcal{V}_{\ominus}$, and there exists at least one time point $t_1\in[k,k+\tilde{B}-1]$ such that $(s,v_1)\in\mathcal{E}^{(t_1)}$. We construct the trajectory as 
\[
s\xrightarrow{k}s\xrightarrow{k+1}\cdots s\xrightarrow{t_1-1}s\xrightarrow{t_1}v_1,
\]
and record the time point $t_1$. Then go to Step 2.

\textbf{Step 2.} Rename the latest added node as $v_-$ and the latest recorded time point as $t_-$. If $v_-=e$, go to Step 4. Otherwise, go to Step 3.

\textbf{Step 3.} Changing starting node as $v_-$ and time point as $t_-+1$, using Assumption~\ref{ass:tv graph} again, during $\tilde{B}$ time intervals, we can find at least one path from $v_-$ to $e$, from which we pick out a shortest path $v_-\to \dots \to v_\times \to v_+\to \dots\to e$, where $v_\times$ is the last node which still stays in $\mathcal{V}_{\ominus}$ and $v_+$ is the first node not in $\mathcal{V}_{\ominus}$. We denote the first time point when $v_\times$ is searched as $t_\times$. Then there exists at least one time point $t_+\in[t_\times +1, t_\times +\tilde{B}]$ such that $(v_\times, v_+)\in\mathcal{E}^{(t_+)}$. We continue the trajectory from time point $t_\times$ as
\[
\cdots \xrightarrow{t_\times}v_\times \xrightarrow{t_\times +1}\cdots v_\times\xrightarrow{t_+-1}v_\times  \xrightarrow{t_+}v_+.
\]
It should be noticed that if $v_\times \neq v_-$, we have $t_\times<t_-$. In this case, the state transition trajectory is partially reconstructed. After adding $v_+$ into $\mathcal{V}_{\ominus}$ and recording the new time point $t_+$, we go back to Step 2.

\textbf{Step 4.} The searching process is finished, and a state transition trajectory has been constructed.

As the set of searched nodes $\mathcal{V}_{\ominus}$ expands continuously, Step 3 will be repeated no more than $n-1$ times. We can find a state transition trajectory starting at time point $k$ from $s$ to $e$ whose length $B$ is no longer than $n\tilde{B}$. Together with the claim at the beginning of the proof, the existence of $B$ and $\eta$ is proved.
\end{proof}

\subsection{Linear Convergence Lemma for Positive Mixing Matrices}\label{sec(app):A-limit}

\begin{lemma}
    \label{lem(app):A-limit}
    For a sequence of row-stochastic matrices $\tilde{A}^{(1)}, \tilde{A}^{(2)}, \dots\in\RR^{n\times n}$, assuming $\tilde{A}^{(t)}_{i,j}\geq\eta>0,~\forall i,j\in[n],~\forall t=1,2,\dots$, we have
\begin{enumerate}[label=(\roman*)]
    \item The limit $\tilde{A}^{\infty}:=\prod_{t=1}^{+\infty} \tilde{A}^{(t)}$ exists. And $\tilde{A}^{\infty}$ is a row-stochastic matrix with identical rows, i.e.,
    \[
        \tilde{A}^{\infty}=\one \pi^\top,
    \]
    where $\pi$ is an asymptotical steady distribution determined by the whole sequence \(\{\tilde{A}^{(t)}\}_{t=1}^\infty\).
    \item The maximum deviation converges with geometric rate, i.e.,
    \[
        \max\bigg|\prod_{t=1}^T \tilde{A}^{(t)}-\tilde{A}^{\infty}\bigg|\leq \frac{1+\eta}{\eta}(1-\eta)^T
    \]
\end{enumerate}
\end{lemma}

\begin{proof}
The proof technique here is adapted from~\cite{nedic2009distributed,nedic_convergence_2010}. However, our lemma is more readily applicable under the milder Assumptions~\ref{ass:tv graph},~\ref{ass:broadcast}, and~\ref{ass:matrix}, which were not considered in~\cite{nedic2009distributed,nedic_convergence_2010}.

\textbf{Proof of (i).} Consider an arbitrary vector $\boldsymbol{x}^{(0)}\in\RR^{n}$, we suppose to prove that the limit of $\boldsymbol{x}^{(t)}:=\tilde{A}^{(t)}\cdots \tilde{A}^{(1)}\boldsymbol{x}^{(0)}$ exists. Note that $\boldsymbol{x}^{(t)}$ can be decomposed into a consensus part and a surplus part, i.e.,
\begin{equation}
    \boldsymbol{x}^{(t)}=p^{(t)}\one +\boldsymbol{z}^{(t)},\quad \boldsymbol{z}^{(t)}\geq 0.\label{two-part-x}
\end{equation}
where $p^{(t)}$ is a scalar and  $\boldsymbol{z}^{(t)}$ is a vector. Our goal is to construct a Cauchy sequence $\{p^{(t)}\}_{t=0}^\infty$ and $\{\bm{z}^{(t)}\}_{t=0}^\infty\to \bm{0}$. We first choose
\begin{equation*}
    p^{(0)}=\min_{1\leq i \leq n}\{\boldsymbol{x}^{(0)}_i\},\quad \boldsymbol{z}^{(0)}=\boldsymbol{x}^{(0)}-\min_{1\leq i \leq n}\{\boldsymbol{x}^{(0)}_i\}\one.
\end{equation*}
Using the decomposition, the recursion of $\boldsymbol{x}^{(t)}$ can be expanded as
\begin{align*}
    \boldsymbol{x}^{(t+1)}=\tilde{A}^{(t+1)}\boldsymbol{x}^{(t)}=\tilde{A}^{(t+1)}(p^{(t)}\one +\boldsymbol{z}^{(t)})=p^{(t)}\one +\tilde{A}^{(t+1)}\boldsymbol{z}^{(t)}.
\end{align*}
We define the index $i^*$ as 
\begin{align*}
    i^*:=\mathop{\arg\min}_{1\leq i \leq n} \tilde{A}^{(t+1)}_{i,\cdot} \boldsymbol{z}^{(t)},
\end{align*}
which means $\min_{1\leq i\leq n} \{\tilde{A}^{(t+1)}_{i,\cdot} \boldsymbol{z}^{(t)}\}=\tilde{A}^{(t+1)}_{i^*,\cdot} \boldsymbol{z}^{(t)}$. And we further specify $\boldsymbol{z}^{(t+1)}$  and $p^{(t+1)}$ as
\begin{align*}
    \boldsymbol{z}^{(t+1)}=\tilde{A}^{(t+1)}\boldsymbol{z}^{(t)}-\big(\tilde{A}^{(t+1)}_{i^*,\cdot} \boldsymbol{z}^{(t)}\big)\one,\quad p^{(t+1)}=p^{(t)}+\tilde{A}^{(t+1)}_{i^*,\cdot} \boldsymbol{z}^{(t)}.
\end{align*}
Because matrices $A^{(t)}$ are row-stochastic, according to our choice of $p^{(t)}$ and $\boldsymbol{z}^{(t)}$, all $\boldsymbol{z}^{(t)}$ are non-negative with minimum zero.

Next we prove the geometric convergence of $\boldsymbol{z}^{(t)}$. Consider each entry of $\boldsymbol{z}^{(t+1)}$, for $i=1,\dots,n$,
\begin{align*}
    \boldsymbol{z}^{(t+1)}_i=\tilde{A}^{(t+1)}_{i,\cdot} \boldsymbol{z}^{(t)}-\tilde{A}^{(t+1)}_{i^*,\cdot} \boldsymbol{z}^{(t)}.
\end{align*}
We divide the indices $i=1,\dots n$ into two subsets of $\{1,\dots,n\}$:
\begin{align*}
    J^+ := \{j|\tilde{A}^{(t+1)}_{i,j}-\tilde{A}^{(t+1)}_{i^*,j}\geq 0\},\quad J^- :=\{j|\tilde{A}^{(t+1)}_{i,j}-\tilde{A}^{(t+1)}_{i^*,j}<0\}.
\end{align*}
Since $\tilde{A}^{(t+1)}$ is row-stochastic, the positive index subset $J^+$ is non-empty. Then, for any $i\in [n]$, we have
\begin{align*}
    0&\leq \boldsymbol{z}^{(t+1)}_i =\tilde{A}^{(t+1)}_{i,\cdot} \boldsymbol{z}^{(t)}-\tilde{A}^{(t+1)}_{i^*,\cdot} \boldsymbol{z}^{(t)}
    =\sum_{1\leq j\leq n} (\tilde{A}^{(t+1)}_{i,j}-\tilde{A}^{(t+1)}_{i^*,j}) \boldsymbol{z}^{(t)}_j \\
    &\leq \sum_{J^+} (\tilde{A}^{(t+1)}_{i,j}-\tilde{A}^{(t+1)}_{i^*,j}) \boldsymbol{z}^{(t)}_j
    \leq \|\boldsymbol{z}^{(t)}\|_{\infty} \sum_{J^+}(\tilde{A}^{(t+1)}_{i,j}-\tilde{A}^{(t+1)}_{i^*,j})    \\
    &= \|\boldsymbol{z}^{(t)}\|_{\infty} \big(\sum_{J^+}\tilde{A}^{(t+1)}_{i,j}-\sum_{J^+}\tilde{A}^{(t+1)}_{i^*,j}\big)\leq (1-\eta)\|\boldsymbol{z}^{(t)}\|_{\infty},
\end{align*}
where the last inequality is because \(\eta\leq \tilde{A}_{i,j}^{(t+1)}\leq 1,~\forall i,j\in[n]\) and \(\sum_{j=1}^n \tilde{A}_{i,j}^{(t+1)}=1\). Then we have
\begin{align*}
    \|\boldsymbol{z}^{(t+1)}\|_\infty \leq (1-\eta)\|\boldsymbol{z}^{(t)}\|_\infty,\quad \forall t=0,1,\dots, 
\end{align*}
and further
\begin{align*}
    \|\boldsymbol{z}^{(T)}\|_\infty \leq (1-\eta)^{T}\|\boldsymbol{z}^{(0)}\|_\infty.
\end{align*}
Therefore, $\boldsymbol{z}^{(t)}\rightarrow 0$ with geometric rate. On the other hand,
\begin{equation}
\begin{aligned}
    0 &\leq p^{(t+1)}-p^{(t)}=  \tilde{A}^{(t+1)}_{i^*,\cdot} \boldsymbol{z}^{(t)}=\sum_{j=1}^n \tilde{A}^{(t+1)}_{i^*,j} \boldsymbol{z}^{(t)}_j \leq \|\boldsymbol{z}^{(t)}\|_\infty \sum_{j=1}^n \tilde{A}^{(t+1)}_{i^*,j}
    \\&\leq \|\boldsymbol{z}^{(t)}\|_\infty \leq (1-\eta)^t\|\boldsymbol{z}^{(0)}\|_\infty,
\end{aligned}    
\label{pt+1-pt}
\end{equation}
implying
\begin{align*}
    p^{(t)}=&p^{(t)}-p^{(t-1)}+p^{(t-1)}-p^{(t-2)}+\dots+p^{(1)}-p^{(0)}+p^{(0)}\notag\\
    \leq & \sum_{i=0}^{t-1} (1-\eta)^i \|\boldsymbol{z}^{(0)}\|_\infty +p^{(0)}=\frac{1-(1-\eta)^t}{\eta}\|\boldsymbol{z}^{(0)}\|_\infty +p^{(0)}\leq \frac{1}{\eta}\|\boldsymbol{z}^{(0)}\|_\infty +p^{(0)}. \label{upper-bound-p}
\end{align*}
Hence, the sequence $p^{(t)}$ is monotonically increasing and bounded, which implies an existent limit $\tilde{p}$.

For any arbitrary $\boldsymbol{x}^{(0)}\in\RR^{n}$, we have $\boldsymbol{z}^{(t)}\rightarrow 0$ and $p^{(t)}\rightarrow \tilde{p}$ (the limit $\tilde{p}$ is related to $\boldsymbol{x}^{(0)}$ and the property of each row-stochastic matrix $\tilde{A}^{(t)}$), which ensures the existence of limit $\prod_{t=1}^{+\infty} \tilde{A}^{(t)}$. Specifically, we choose the arbitrary vector $\boldsymbol{x}^{(0)}$ as $n$ unit vectors $\boldsymbol{e}_1,\dots,\boldsymbol{e}_n$, we can derive,
\begin{align*}
    \prod_{t=1}^{+\infty} \tilde{A}^{(t)}=\lim_{t\rightarrow +\infty} \tilde{A}^{(t)}\cdots \tilde{A}^{(1)} (\boldsymbol{e}_1,\dots,\boldsymbol{e}_n)=(\tilde{p}_1 \one ,\dots,\tilde{p}_n \one ),
\end{align*}
where the exact values of $\tilde{p}_1,\dots,\tilde{p}_n$ cannot be further determined.

Since each matrix $\tilde{A}^{(t)}$ is row-stochastic, the finite product $\prod_{t=1}^T \tilde{A}^{(t)}$ is also row-stochastic, and the limit $\tilde{A}^{\infty}=\prod_{t=1}^{+\infty} \tilde{A}^{(t)}$ is still row-stochastic with identical rows. We denote $\pi:=(\tilde{p}_1,\dots,\tilde{p}_n)^\top$, and then
\begin{equation*}
    \tilde{A}^{\infty}=\one \pi^\top.
\end{equation*}

Since $\tilde{p}_1,\dots,\tilde{p}_n$ are determined by the whole matrix sequence and the exact values are implicit, the stacked limit vector $\pi$ also maintain those properties.

\textbf{Proof of (ii).} Still considering an arbitrary $\boldsymbol{x}^{(0)}\in\RR^{n}$ with decomposition \ref{two-part-x}, we have
\begin{align}
    (\tilde{A}^{(t)}\cdots \tilde{A}^{(1)})\boldsymbol{x}^{(0)}-\tilde{A}^{\infty}\boldsymbol{x}^{(0)}=\boldsymbol{x}^{(t)}-\tilde{p}\one =\boldsymbol{z}^{(t)}+(p^{(t)}-\tilde{p})\one.\label{AAAx-Ainftyx}
\end{align}
The coefficient of the second term can be bounded as
\begin{align*}
    0\leq \tilde{p}-p^{(t)}=\lim_{T\rightarrow +\infty} p^{(T)}-p^{(t)}\leq \lim_{T\rightarrow +\infty} \sum_{s=t}^{T-1}(1-\eta)^s\|\boldsymbol{z}^{(0)}\|_\infty=\frac{(1-\eta)^t}{\eta}\|\boldsymbol{z}^{(0)}\|_\infty,
\end{align*}
where the second inequality is from \ref{pt+1-pt}. So the infinite norm of \ref{AAAx-Ainftyx} can be bounded as 
\begin{align*}
    &\|(\tilde{A}^{(t)}\cdots \tilde{A}^{(1)})\boldsymbol{x}^{(0)}-\tilde{A}^{\infty} \boldsymbol{x}^{(0)} \|_\infty \leq \|\boldsymbol{z}^{(t)}\|_\infty + (\tilde{p}-p^{(t)})\\
    \leq &(1-\eta)^t \|\boldsymbol{z}^{(0)}\|_\infty +\frac{(1-\eta)^t}{\eta}\|\boldsymbol{z}^{(0)}\|_\infty =\frac{1+\eta}{\eta}(1-\eta)^t \|\boldsymbol{z}^{(0)}\|_\infty.
\end{align*}
Substitute \(\boldsymbol{x}^{(0)}\) with unit vectors $\boldsymbol{e}_j,~j=1,2,\dots,n$, and notice that $\|\boldsymbol{z}^{(0)}\|_\infty =\max_{i\in[n]} \boldsymbol{x}^{(0)}_i-\min_{i\in[n]} \boldsymbol{x}^{(0)}_i=1$ for unit vectors $\boldsymbol{e}_j$, we have
\begin{align*}
    \|(\tilde{A}^{(t)}\cdots \tilde{A}^{(1)})_{\cdot,j}-\tilde{p}_j\one \|_\infty \leq \frac{1+\eta}{\eta}(1-\eta)^t.
\end{align*}
Taking all the columns of $\prod_{t=1}^T\tilde{A}^{(t)}$, we obtain the conclusion.
\end{proof}

\subsection{Proof of Proposition~\ref{prop:row-sto-linear-convergence}}
\label{sec-app-limit-property}

The proof pipeline is using the linear convergence result of Lemma~\ref{lem(app):A-limit} in Appendix~\ref{sec(app):A-limit}, which focuses on positive mixing matrices, and then bounding the remaining factors.

\begin{proof}
According to Proposition~\ref{prop:positive}, there exists a positive period integer $B$. Based on the original sequence $\big\{\prod_{i=0}^{k-1}A^{(i)}\big\}_{k=1}^\infty$, by adding brackets to every $B$ factors, we get a new sequence:
\[
\tilde{A}^{(t)}\cdots\tilde{A}^{(1)}:=\bigg(\prod_{i=(t-1)B}^{tB-1} A^{(i)}\bigg)\cdots\bigg(\prod_{i=0}^{B-1}A^{(i)}\bigg),\quad t=1,2,\dots,
\]
where all the $\tilde{A}^{(t)},\,t=1,2,\dots$ satisfy $\tilde{A}_{i,j}^{(t)}\ge \eta >0,\,\forall i,j\in[n]$. According to Lemma~\ref{lem(app):A-limit}, there exists a limit $\tilde{A}^\infty :=\lim_{t\rightarrow\infty}\tilde{A}^{(t)}\cdots\tilde{A}^{(1)}=\one\pi^\top$, and the limit vector $\pi$ is determined by the whole sequence $\{\tilde{A}^{(t)}\}_{t=1}^\infty$. We next prove the original sequence $\big\{\prod_{i=0}^{k-1}A^{(i)}\big\}_{k=1}^\infty$converges to the same limit $\tilde{A}^\infty$. For a given $k>0$, we make the following decomposition,
\begin{equation}
\label{eq:A-decomposition}
\begin{aligned}
\prod_{i=0}^{k-1}A^{(i)}&=\bigg(\prod_{i=TB}^{k-1} A^{(i)}\bigg)\bigg(\prod_{i=(T-1)B}^{TB-1}A^{(i)}\bigg)\cdots\bigg(\prod_{i=0}^{B-1} A^{(i)}\bigg)\\
&:=\tilde{A}^{(-1)}\tilde{A}^{(T)}\cdots \tilde{A}^{(1)},
\end{aligned}
\end{equation}
where $T=\lfloor k/B\rfloor$. Using the linear convergence result from Lemma~\ref{lem(app):A-limit}, we have:
\[
\max\bigg|\prod_{t=1}^T \tilde{A}^{(t)}-\tilde{A}^{\infty}\bigg|\le \frac{1+\eta}{\eta}(1-\eta)^T.
\]
Therefore, the total deviation can be bounded as
\[
\begin{aligned}
    \Bigg\|\prod_{i=0}^{k-1} A^{(i)}-\tilde{A}^{\infty}\Bigg\|_F &= \Bigg\|\tilde{A}^{(-1)}\prod_{t=1}^T\tilde{A}^{(t)}-\tilde{A}^{\infty}\Bigg\|_F = \Bigg\|\tilde{A}^{(-1)}\bigg(\prod_{t=1}^T\tilde{A}^{(t)}-\tilde{A}^\infty \bigg)\Bigg\|_F\\
    &\le \Big\|\tilde{A}^{(-1)}\Big\|_2\Bigg\|\prod_{t=1}^T\tilde{A}^{(t)}-\tilde{A}^\infty\Bigg\|_F \le \sqrt{n} \cdot n\max\bigg|\prod_{t=1}^T \tilde{A}^{(t)}-\tilde{A}^\infty\bigg|\\
    & \le n^{3/2}\frac{1+\eta}{\eta}(1-\eta)^T,
\end{aligned}
\]
Without loss of generality, we let $k>B$, so that $T=\lfloor k/B\rfloor \ge k/B-1$. The norm can be bounded as 
\[
\Bigg\|\prod_{i=0}^{k-1} A^{(i)}-\tilde{A}^{\infty}\Bigg\|_F \le n^{3/2}\frac{1+\eta}{\eta(1-\eta)}\cdot\big(\sqrt[B]{1-\eta}\big)^k.
\]
Therefore, the limit $\prod_{i=0}^{\infty}A^{(k)}=\one\pi^\top$ exists and the value is given by the infinite product of $\{\tilde{A}^{(t)}\}_{t=1}^\infty$. Further, the convergence is exponentially fast.
\end{proof}

\subsection{Sub-matrices Inequality}\label{app:A-l-l}
\begin{lemma}
    \label{lem:A-l-l}
    For any $S$ row-stochastic matrices $A^{(1)},\dots,A^{(S)}\in\mathbb{R}^{n\times n}$, we have
    \begin{equation*}
        0\leq \prod_{s=1}^S A^{(s)}_{-\ell,-\ell}\leq \bigg(\prod_{s=1}^S A^{(s)}\bigg)_{-\ell,-\ell},
    \end{equation*}
    where $M_{-\ell,-\ell}$ represents the $(n-1)\times (n-1)$ matrix obtained by removing the $\ell$-th row and the $\ell$-th column of $M\in\RR^{n\times n}$.
\end{lemma}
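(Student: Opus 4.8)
The plan is to prove both assertions---nonnegativity and the entrywise domination---by induction on the number of factors $S$, with the inductive step resting on a single algebraic identity that compares the deleted submatrix of a product against the product of the deleted submatrices. The nonnegativity $0\le \prod_{s=1}^S A^{(s)}_{-\ell,-\ell}$ is immediate and disposed of first: each $A^{(s)}$ is row-stochastic, hence entrywise nonnegative, so each $A^{(s)}_{-\ell,-\ell}$ is nonnegative, and a product of nonnegative matrices is nonnegative (recall $\le$ denotes entrywise comparison in this paper).

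The heart of the argument is the following block identity. Writing $P:=\prod_{s=1}^{S-1}A^{(s)}$ and $A:=A^{(S)}$, for indices $i,j\neq \ell$ the $(i,j)$ entry of $(PA)_{-\ell,-\ell}$ is $\sum_{k=1}^n P_{ik}A_{kj}$, which splits as $\sum_{k\neq\ell}P_{ik}A_{kj}+P_{i\ell}A_{\ell j}$. In matrix form this reads
\[
(PA)_{-\ell,-\ell}=P_{-\ell,-\ell}\,A_{-\ell,-\ell}+P_{-\ell,\ell}\,A_{\ell,-\ell},
\]
where $P_{-\ell,\ell}$ is column $\ell$ of $P$ with its $\ell$-th entry deleted and $A_{\ell,-\ell}$ is row $\ell$ of $A$ with its $\ell$-th entry deleted. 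Since a product of row-stochastic matrices is again row-stochastic, $P$ is nonnegative, so the correction term $P_{-\ell,\ell}A_{\ell,-\ell}$ is an outer product of nonnegative vectors and is therefore entrywise nonnegative. Dropping it yields $(PA)_{-\ell,-\ell}\ge P_{-\ell,-\ell}\,A_{-\ell,-\ell}$.

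I would then close the induction as follows. The base case $S=1$ is an equality. Assuming the claim for $S-1$ factors gives $0\le \prod_{s=1}^{S-1}A^{(s)}_{-\ell,-\ell}\le P_{-\ell,-\ell}$. Right-multiplying this entrywise inequality by the nonnegative matrix $A^{(S)}_{-\ell,-\ell}$ preserves it, so
\[
\prod_{s=1}^{S}A^{(s)}_{-\ell,-\ell}=\Big(\prod_{s=1}^{S-1}A^{(s)}_{-\ell,-\ell}\Big)A^{(S)}_{-\ell,-\ell}\le P_{-\ell,-\ell}\,A^{(S)}_{-\ell,-\ell}\le (PA^{(S)})_{-\ell,-\ell}=\Big(\prod_{s=1}^{S}A^{(s)}\Big)_{-\ell,-\ell},
\]
where the last inequality is exactly the block identity above. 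This completes the inductive step.

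The main obstacle is recognizing and justifying the block-decomposition identity rather than any heavy computation: the subtlety is that deleting row and column $\ell$ \emph{after} multiplication retains the contribution of the index $k=\ell$ in the inner sum, whereas deleting \emph{before} multiplication discards it. The entire inequality hinges on showing that this discarded contribution, $P_{-\ell,\ell}A_{\ell,-\ell}$, is entrywise nonnegative, which is precisely where the row-stochasticity (equivalently, nonnegativity) of the factors enters. Once this is in place, the induction is routine.
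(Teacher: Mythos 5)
Your proof is correct and takes essentially the same approach as the paper's: both hinge on the block identity $(PA)_{-\ell,-\ell}=P_{-\ell,-\ell}A_{-\ell,-\ell}+P_{-\ell,\ell}A_{\ell,-\ell}$ and the observation that the discarded cross term is a nonnegative rank-one outer product, with nonnegativity of the factors doing all the work. The paper verifies only the two-factor case and appeals to induction implicitly, whereas you spell the induction out; the difference is purely expository.
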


\begin{proof}
The left inequality is trivial as non-negative combination cannot produce negative elements. For the right inequality, it is sufficient only to consider the case $S=2$. The case $S > 2$ can be obtained by mathematical induction. Without loss of generality, we let $\ell=1$ to simplify our notation. In this way,
\begin{align*}
    A^{(1)}A^{(2)}=
    \begin{bmatrix}
        A^{(1)}_{1,1} & A^{(1)}_{1,-1}\\
        A^{(1)}_{-1,1} & A^{(1)}_{-1,-1}
    \end{bmatrix}
    \begin{bmatrix}
        A^{(2)}_{1,1} & A^{(2)}_{1,-1}\\
        A^{(2)}_{-1,1} & A^{(2)}_{-1,-1}
    \end{bmatrix}.
\end{align*}
Hence,
\begin{align*}
    \big(A^{(1)}A^{(2)}\big)_{-1,-1}= A^{(1)}_{-1,1} A^{(2)}_{1,-1}+A^{(1)}_{-1,-1}A^{(2)}_{-1,-1},
\end{align*}
where $A^{(1)}_{-1,1}A^{(2)}_{1,-1}$ is a $1$-rank matrix generated by two vectors. Since all entries of $A^{(1)}_{-1,1}$ and $A^{(2)}_{1,-1}$ are non-negative, we have $A^{(1)}_{-1,1}A^{(2)}_{1,-1}\geq 0$, which leads to the conclusion.
\end{proof}

\subsection{Proof of Theorem~\ref{thm:exp convergence}}\label{app:proof-thm1}

\begin{proof}
We first establish the monotonicity of $W^{(k)}$, and then prove its exponential convergence rate. The convergence of $\vz^{(k)}$ follows as a direct corollary.

\textbf{Proof of (i).} Let $X_{i,j}$ denote the $(i,j)$-th entry of $X \in \mathbb{R}^{n \times n}$, and let $X_{-i,-j}$ denote the $(n-1) \times (n-1)$ submatrix obtained by removing the $i$-th row and $j$-th column of $X$. The notations $X_{i,-j}$ and $X_{-i,j}$ are defined similarly.

Now we consider the $\ell$-th column of $W^{(k)}$ generated by Algorithm~\ref{alg:3.1}, which we denote as $W_{\cdot,\ell}^{(k)}$. According to the algorithm, $W_{\ell,\ell}^{(k)}=1/n$. When $i\neq \ell$, we have
\[
W^{(k+1)}_{i,\ell}-1/n=\sum_{j=1}^n A_{i,j}^{(k)}W^{(k)}_{j,\ell}-\frac{1}{n}=\sum_{j=1,j\neq l}^n A_{i,j}^{(k)}(W^{(k)}_{j,\ell}-1/n).
\]
The last equation utilizes the fact that the row sum of $A^{(k)}=1$ and $W_{\ell,\ell}^{(k)}=1/n$. Consider each row of $W_{\cdot,\ell}^{(k)}$ except the $\ell$-th row:
\begin{equation}\label{eq:W-column-decomposition}
\begin{aligned}
    W_{-\ell,\ell}^{(k+1)}-n^{-1}\mathds{1}_{n-1} &= [\underbrace{W_{1,\ell}^{(k+1)}-n^{-1},\dots,W_{n,\ell}^{(k+1)}-n^{-1}}_{\text{without $\ell$-th row}}]^\top\\
    &= \bigg[\underbrace{\sum_{j\neq l}A_{1,j}^{(k)}(W_{j,\ell}^{(k)}-1/n),\dots,\sum_{j\neq \ell}A_{n,j}^{(k)}(W_{j,\ell}^{(k)}-1/n)}_{\text{without $\ell$-th row}}\bigg]^\top\\
    &= A_{-\ell,-\ell}^{(k)}(W_{-\ell,\ell}^{(k)}-n^{-1}\mathds{1}_{n-1})
\end{aligned}
\end{equation}
Take the infinite norm on both sides of the equation and use the inequality between the vector norm and the corresponding matrix norm, we have
\[
\begin{aligned}
\|W_{-\ell,\ell}^{(k+1)}-n^{-1}\mathds{1}_{n-1}\|_\infty &= \|A_{-\ell,-\ell}^{(k)}(W_{-\ell,\ell}^{(k)}-n^{-1}\mathds{1}_{n-1})\|_\infty \\ &\le \|A_{-\ell,-\ell}^{(k)}\|_\infty \|W_{-\ell,\ell}^{(k)}-n^{-1}\mathds{1}_{n-1}\|_\infty 
\end{aligned}
\]
On one hand, we have $\|A_{-\ell,-\ell}^{(k)}\|_{\infty}\le 1$, since $A^{(k)}$ is non-negative and the row sum of $A^{(k)}$ is no more than $1$. On the other hand, 
\[
\|W_{-\ell,\ell}^{(k)}-n^{-1}\mathds{1}_{n-1}\|_\infty = \max |W_{-\ell,\ell}^{(k)}-n^{-1}\mathds{1}_{n-1}| = \max |W_{\cdot,\ell}^{(k)}-n^{-1}\one |. \]
Therefore, for each column of $W^{(k)}$, we have
\[
\max |W^{(k+1)}_{\cdot,\ell}-n^{-1}\one |\le \max |W^{(k)}_{\cdot, \ell}-n^{-1}\one |.
\]
Taking all columns of $W^{(k)}$ into consideration, we have
\[
\max |W^{(k+1)}-\fulcon |\le \max|W^{(k)} - \fulcon|,
\]
which indicates that the sequence $\{\max|W^{(k)}-n^{-1}\one \one^\top|\}_{k\ge 0} $ is non-increasing.

\textbf{Proof of (ii).} Using \eqref{eq:W-column-decomposition} recurrently and noticing $W^{(0)}_{-\ell,\ell}=\mathbf{0}_{n-1}$, we have
\[
W^{(k)}_{-\ell,\ell}-n^{-1}\mathds{1}_{n-1} = \bigg(\prod_{i=0}^{k-1} A^{(i)}_{-\ell,-\ell}\bigg)(-n^{-1}\mathds{1}_{n-1}),
\]
implying 
\begin{equation}
\begin{aligned}
    \|W^{(k)}_{-\ell,\ell}-n^{-1}\mathds{1}_{n-1}\|_2\leq & \bigg\|\prod_{i=0}^{k-1} A^{(i)}_{-\ell,-\ell}\bigg\|_2\cdot\|n^{-1}\mathds{1}_{n-1}\|_2\\
    \leq& \frac{1}{\sqrt{n}}\bigg\|\prod_{i=0}^{k-1} A^{(i)}_{-\ell,-\ell}\bigg\|_2\leq \frac{1}{\sqrt{n}}\bigg\|\prod_{i=0}^{k-1} A^{(i)}_{-\ell,-\ell}\bigg\|_F.
\end{aligned}
\label{eq:W-1-2-F}
\end{equation}
Considering the product of $k$ row-stochastic sub-matrices, under the conclusion of Proposition~\ref{prop:positive}, we suppose $k>B$. Denoting $T=\lfloor k/B\rfloor$, we use the similar decomposition as \eqref{eq:A-decomposition} and then
\[
\begin{aligned}
    \prod_{i=0}^{k-1}A_{-\ell,-\ell}^{(i)} &= \bigg(\prod_{i=TB}^{k-1}A_{-\ell,-\ell}^{(i)}\bigg)\bigg(\prod_{i=(T-1)B}^{TB-1}A_{-\ell,-\ell}^{(i)}\bigg)\cdots\bigg(\prod_{i=0}^{B-1}A_{-\ell,-\ell}^{(i)}\bigg)\\
    &\le \bigg(\prod_{i=TB}^{k-1}A^{(i)}\bigg)_{-\ell,-\ell}\bigg(\prod_{i=(T-1)B}^{TB-1}A^{(i)}\bigg)_{-\ell,-\ell}\cdots\bigg(\prod_{i=0}^{B-1}A^{(i)}\bigg)_{-\ell,-\ell}\\
    &= \tilde{A}^{(-1)}_{-\ell,-\ell}\tilde{A}^{T}_{-\ell,-\ell}\cdots\tilde{A}^1_{-\ell,-\ell},
\end{aligned}
\]
where the inequality is due to Lemma~\ref{lem:A-l-l} introduced in Appendix~\ref{app:A-l-l} and $\tilde{A}_{-\ell,-\ell}^{(t)}$ is defined in \eqref{eq:A-decomposition}. From Proposition~\ref{prop:positive} we know that $\tilde{A}_{i,j}^{(t)}\ge\eta,\forall i,j\in[n]$, so each row sum of $\tilde{A}^{(t)}_{-\ell,-\ell}$ is no more than $1-\eta$. Next, we can prove that for two series of non-negative scalars $a_1,\dots,a_{n-1};b_1,\dots,n_{n-1}$, if $\sum_{i=1}^{n-1}a_i\le 1-\eta$, then $\sum_{i=1}^{n-1}a_ib_i\le (1-\eta)\max_{i}\{b_i\}$. Now consider two non-negative matrices $A,B\in\mathbb{R}^{(n-1)\times(n-1)}$, if each row sum of $A$ is no more than $1-\eta$, we have $(AB)_{i,j}=\sum_{k=1}^{n-1}A_{i,k}B_{k,j}\le (1-\eta)\max_k B_{k,j}$. By choosing $A=\tilde{A}_{-\ell,-\ell}^{(t+1)}$, $B=\tilde{A}_{-\ell,-\ell}^{(t)}\cdots\tilde{A}_{-\ell,-\ell}^{(1)}$, taking maximum for all elements of the left hand side and maximum for all columns of the right hand side, we obtain the following recursive inequality, 
\[
\max|\tilde{A}_{-\ell,-\ell}^{(t+1)}\tilde{A}_{-\ell,-\ell}^{(t)}\cdots\tilde{A}_{-\ell,-\ell}^{(1)}|\le (1-\eta)\max|\tilde{A}_{-\ell,-\ell}^{(t)}\cdots\tilde{A}_{-\ell,-\ell}^{(1)}|,\quad t=1,\dots,T-1.
\]
With $\max|\tilde{A}_{-\ell,-\ell}^{(1)}|\le 1-\eta$, multiplying all the recursive inequalities in terms of $t=1,2,\dots,T-1$, we have
\[
\max\bigg|\prod_{t=1}^T\tilde{A}_{-\ell,-\ell}^{(t)}\bigg|\le (1-\eta)^T.
\]
For the rest factor $\tilde{A}_{-\ell,-\ell}^{(-1)}$, since its row sum is no larger than $1$, we can similarly obtain
\[
\max\bigg|\tilde{A}_{-\ell,-\ell}^{(-1)}\prod_{t=1}^T\tilde{A}_{-\ell,-\ell}^{(t)}\bigg|\le \max\bigg|\prod_{t=1}^T\tilde{A}_{-\ell,-\ell}^{(t)}\bigg|\le (1-\eta)^T.
\]
Hence, continuing \eqref{eq:W-1-2-F}, we have 
\[
\begin{aligned}
    &~\|W^{(k)}_{-\ell,\ell}-n^{-1}\mathds{1}_{n-1}\|_2\leq \frac{1}{\sqrt{n}}\bigg\|\prod_{i=0}^{k-1} A^{(i)}_{-\ell,-\ell}\bigg\|_F\le\sqrt{n}\max\bigg|\prod_{i=1}^{k-1}A_{-\ell,-\ell}^{(i)}\bigg|\\
    \leq&~\sqrt{n}\max\bigg|\tilde{A}_{-\ell,-\ell}^{(-1)}\prod_{t=1}^T\tilde{A}_{-\ell,-\ell}^{(t)}\bigg|\le \sqrt{n}(1-\eta)^T.
\end{aligned}
\]
Knowing that $W_{\ell,\ell}^{(k)}-n^{-1}\equiv 0$, we have $\|M_{-\ell,\ell}^{(k)}-n^{-1}\mathds{1}_{n-1}\|_2=\|M_{\cdot,\ell}^{(k)}-n^{-1}\one\|_2$. Taking all columns of $W^{(k)}$ into consideration, we finally have
\[
\|W^{(k)}-\fulcon\|_F\le n(1-\eta)^T.
\]
Without loss of generality, we let $k>B$, so that $T=\lfloor k/B\rfloor\ge k/B-1$. So the convergence rate can be bounded as
\[
\|W^{(k)}-\fulcon\|_F\le \frac{n}{1-\eta}\big(\sqrt[B]{1-\eta}\big)^k.
\]

\textbf{Proof of (iii).} Using the iteration equation \eqref{eq:z=Wx}, we can bound the consensus error $\|\vz^{(k)}-\fulcon \vx\|_F$ as
\[
\begin{aligned}
    \|\vz^{(k)}-\fulcon\vx\|_F &= \|W^{(k)}\vx-\fulcon \vx\|_F\le \|W^{(k)}-\fulcon\|_F\|\vx\|_F \\
    &\le \frac{n}{1-\eta}\big(\sqrt[B]{1-\eta}\big)^k\|\vx\|_F,
\end{aligned}
\]
which finishes the proof.
\end{proof}

\subsection{Proof of Lemma~\ref{lemma:descent}}\label{app:proof-descent}

\begin{proof}
Consider the update of $\bar{x}^{(k+1)}$ using \eqref{eq:PULM-DGD-iter}:
\begin{align*}
    \bar{x}^{(k+1)}&=n^{-1}\one^\top\big( A^{(k)}\vx^{(k)}-\gamma W^{(k)}\vg^{(k)}\big)\\
    &=\bar{x}^{(k)}-\gamma\bar{g}^{(k)}+\underbrace{n^{-1}\one^\top (A^{(k)}-E_n)\Delta_x^{(k)}}_{\text{Consensus Error}}-\underbrace{n^{-1}\gamma \one^\top (W^{(k)}-\fulcon)\vg^{(k)}}_{\text{Descent Deviation}}
\end{align*}
We apply the $L$-smooth inequality on $\bar{x}^{(k+1)}$, $\bar{x}^{(k)}$ and using average inequality:
\begin{equation}
\begin{aligned}
    f(\bar{x}^{(k+1)})&\le f(\bar{x}^{(k)}) -\gamma \ip{\bar{g}^{(k)},\nabla f(\bar{x}^{(k)})}\\
    &\quad +n^{-1}\ip{\one^\top(A^{(k)}-\fulcon)\Delta_x^{(k)},\nabla f(\bar{x}^{(k)})}\\
    &\quad-n^{-1}\gamma \ip{\one^\top(W^{(k)}-\fulcon)\vg^{(k)},\nabla f(\bar{x}^{(k)})}+\frac{3\gamma^2 L}{2}\norm{\bar{g}^{(k)}}^2\\
    &\quad +\frac{3L}{2n^2}\norm{\one^\top(A^{(k)}-\fulcon)\Delta_x^{(k)}}^2+\frac{3\gamma^2 L}{2n^2}\norm{\one^\top(W^{(k)}-\fulcon)\vg^{(k)}}^2
\end{aligned}
\label{ieq:des-1}
\end{equation}
To proceed on, we split the right-hand side into 5 terms, which are
\begin{align*}
    &\Delta_1^{(k)}:=-\gamma \ip{\bar{g}^{(k)},\nabla f(\bar{x}^{(k)})}+\frac{3\gamma^2 L}{2}\norm{\bar{g}^{(k)}}^2, \\
    & \Delta_2^{(k)}:=n^{-1}\ip{\one^\top(A^{(k)}-\fulcon)\Delta_x^{(k)},\nabla f(\bar{x}^{(k)})},\\
    &\Delta_3^{(k)}:=-n^{-1}\gamma \ip{\one^\top(W^{(k)}-\fulcon)\vg^{(k)},\nabla f(\bar{x}^{(k)})},\\ &\Delta_4^{(k)}:=\frac{3L}{2n^2}\norm{\one^\top(A^{(k)}-\fulcon)\Delta_x^{(k)}}^2,\quad \Delta_5^{(k)}:=\frac{3\gamma^2 L}{2n^2}\norm{\one^\top(W^{(k)}-\fulcon)\vg^{(k)}}^2.
\end{align*}
 When $\gamma \le \frac{1}{6L}$, the first term can be bounded as:
\begin{align*}
     \Delta_1^{(k)}
    &= -\frac{\gamma-3\gamma^2 L}{2}\norm{\bar{g}^{(k)}}^2 -\frac{\gamma}{2}\norm{\nabla f(\bar{x}^{(k)})}^2+\frac{\gamma}{2}\norm{\bar{g}^{(k)}-\nabla f(\bar{x}^{(k)})}^2 \\
    &\le -\frac{\gamma}{4}\norm{\bar{g}^{(k)}}^2 -\frac{\gamma}{2}\norm{\nabla f(\bar{x}^{(k)})}^2+\frac{\gamma L^2}{2n}\norm{\Delta_x^{(k)}}_F^2
\end{align*}

For the second term, we have the observation: For any $y\in\mathbb{R}^n$, $\bar{y}:=n^{-1}\one^\top y$ represents the average of all its elements, while $\tilde{y}:=n^{-1}\one^\top A^{(k)}y$ is a convex but not necessarily average combination of all its elements. But the following inequality always holds: $(\tilde{y}-\bar{y})^2\le \max_{i}|y_i-\bar{y}|^2\le \sum_{i=1}^n (y_i-\bar{y})^2$. Generalizing to $d$ columns, we have $\|n^{-1}\one^\top (A^{(k)}-\fulcon)\vy\|^2\le \|\vy-\fulcon\vy\|_F^2$. And we further have $\Delta_x^{(k)}-\fulcon\Delta_x^{(k)}=\Delta_x^{(k)}$. Hence, using Cauchy-Schwarz inequality, $\Delta_2^{(k)}$ can be bounded as:
\begin{align*}
      \Delta_2^{(k)}
    &\le \norm{n^{-1}\one^\top(A^{(k)}-\fulcon)\Delta_x^{(k)}}\cdot \norm{\nabla f(\bar{x}^{(k)})}\\
    &\le \norm{\Delta_x^{(k)}}_F\cdot \norm{\nabla f(\bar{x}^{(k)})}\le \frac{p_k}{2}\norm{\Delta_x^{(k)}}_F^2+\frac{1}{2p_k}\norm{\nabla f(\bar{x}^{(k)})}^2,
\end{align*}
where the parameter $p_k$ is to be determined. 

Similarly, the third term can be bounded as:
\begin{align*}
    \Delta_3^{(k)}
    &\le \gamma \norm{n^{-1}\one^\top(W^{(k)}-\fulcon)\vg^{(k)}}\cdot \norm{\nabla f(\bar{x}^{(k)})}\\
    &\le \gamma n^{-1/2} \|W^{(k)}-\fulcon\|_F \|\vg^{(k)}\|_F\|\nabla f(\bar{x}^{(k)})\|\\
    &\le \gamma  \frac{C_W \beta^{R_k}}{\sqrt{n}}(\frac{q_k}{2}\norm{\vg^{(k)}}_F^2+\frac{1}{2q_k}\norm{\nabla f(\bar{x}^{(k)})}^2),
\end{align*}
where the third inequality uses \eqref{eq-pulm-convergence-rate}, and $q_k$ is a constant to be determined. 

The fourth term is smaller than \(\frac{3L}{2}\norm{\Delta_x^{(k)}}^2\), and the fifth term is smaller than $\frac{3 C_W^2\gamma^2L \beta^{2R_k}}{2n}\norm{\vg^{(k)}}_F^2$. Now combine these estimates and plug them into \eqref{ieq:des-1}, we obtain that 
\begin{align*}
    &\quad\frac{\gamma}{4}\norm{\bar{g}^{(k)}}^2+(\frac{\gamma}{2}-\frac{ 1}{2p_k}-\frac{ C_W \beta^{R_k}\gamma}{2q_k\sqrt{n}})\norm{\nabla f(\bar{x}^{(k)})}^2\\
    &\le f(\bar{x}^{(k)})-f(\bar{x}^{(k+1)})+ (\frac{\gamma L^2}{2n}+\frac{ p_k}{2}+\frac{3L}{2})\norm{\Delta_x^{(k)}}_F^2 \\
    &\quad+ (\frac{\gamma  C_W \beta^{R_k}q_k}{2\sqrt{n}}+\frac{3 C_W^2\gamma^2 L  \beta^{2R_k}}{2n})\norm{\vg^{(k)}}_F^2.
\end{align*}
Selecting $q_k=\frac{3 C_W \beta^{R_k}}{\sqrt{n}}$, $p_k=\frac{3}{\gamma}$, we obtain that 
\begin{align*}
    &\quad\frac{\gamma}{4}\norm{\bar{g}^{(k)}}^2+\frac{\gamma}{6}\norm{\nabla f(\bar{x}^{(k)})}^2\\
    &\le f(\bar{x}^{(k)})-f(\bar{x}^{(k+1)})+ (\frac{\gamma L^2}{2n}+\frac{3}{2\gamma}+\frac{3L}{2})\norm{\Delta_x^{(k)}}_F^2 + \frac{3\gamma (1+\gamma L)C_W^2 \beta^{2R_k}}{2n}\norm{\vg^{(k)}}_F^2.
\end{align*}
By taking $\gamma\le \frac{1}{6L}$, we obtain that 
\begin{equation}
\begin{aligned}
    &\quad\frac{\gamma}{4}\norm{\bar{g}^{(k)}}^2+\frac{\gamma}{6}\norm{\nabla f(\bar{x}^{(k)})}^2\\
    &\le f(\bar{x}^{(k)})-f(\bar{x}^{(k+1)}) + \frac{127}{72\gamma}\|\Delta_x^{(k)}\|_F^2 + \frac{2\gamma C_W^2\beta^{2R_k} }{n}\norm{\vg^{(k)}}_F^2.
\end{aligned}
\label{eq:descent-1}
\end{equation}

Finally, to deal with the stacked gradient term $\|\vg^{(k)}\|_F^2$, with $L$-smooth assumption, we know that $\forall x, y\in \mathbb{R}^d, i\in [n]$,
\[f_i( {y})\le f_i( {x})+\ip{\nabla f_i( {x}), {y}- {x}}+\frac{L}{2}\norm{ {y}- {x}}^2.\]
By taking $ {y}= {x}-\frac{1}{L}\nabla f_i( {x})$, we obtain that $\frac{1}{2L}\norm{\nabla f_i( {x})}^2 \le f_i( {x})-f_i( {y})\le f_i( {x})-f_i^*$. Furthermore, using $L$-smoothness property and Cauchy-Schwarz inequality, we have
\begin{align}
    \|\vg^{(k)}\|_F^2 &=\norm{\nabla F(\vx^{(k)})}_F^2 \le 2\norm{\nabla F(\vx^{(k)})-\nabla F(\fulcon \vx^{(k)})}_F^2 + 2\norm{\nabla F(\fulcon \vx^{(k)})}_F^2\nonumber\\
    &= 2\sum_{i=1}^n \|\nabla f_i(x_i^{(k)})-\nabla f_i(\bar{x}^{(k)})\|^2 + 2\sum_{i=1}^n \norm{\nabla f_i( {\bar{x}}^{(k)})}^2\nonumber\\
    &\le 2L^2\sum_{i=1}^n \|x_i^{(k)}-\bar{x}^{(k)}\|^2+4L\sum_{i=1}^n(f_i( {\bar{x}}^{(k)})-f_i^*)\nonumber\\
    &=2L^2\norm{\Delta_x^{(k)}}_F^2+4nL(f( {\bar{x}}^{(k)})-f^*),\label{eq:vg-bound}
\end{align}
where $f^*:=n^{-1}\sum_{i=1}^n f_i^*$. To further simplify the inequality, we require $R_k\ge \ln(C_W)/\ln(1/\beta)$ to ensure $C_W\beta^{R_k}\le 1$. Substituting $\|\vg^{(k)}\|_F^2$ with \eqref{eq:vg-bound} into \eqref{eq:descent-1} and using $\gamma\le \frac{1}{6L}$, we obtain the conclusion.
\end{proof}

\subsection{Proof of Lemma~\ref{lemma:consensus-iteration}}\label{app:proof-consensus}

\begin{proof}
Recall the update format \eqref{eq:PULM-DGD-iter} with simplified notations:
\begin{equation}\label{eq:con-1}
    \vx^{(k+1)} = A^{(k)}\vx^{(k)}-\gamma W^{(k)}\vg^{(k)}.
\end{equation}
Based on the $R_k$-length list $\{A^{(k,r)}\}_{r=0}^{R_k-1}$, by adding the remainder row-stochastic matrices arbitrarily, we obtain the sequence $\{A^{(k,r)}\}_{r=0}^{\infty}$ satisfying Proposition~\ref{prop:row-sto-linear-convergence}, and denote the corresponding limit as $A^{(k)}_\infty=\one\pi^{(k)}$. Left-multiply $(I-\fulcon)$ on both sides of \eqref{eq:con-1} and we obtain that
\begin{align}\label{eq:con-2}
    \Delta_x^{(k+1)} = (I-\fulcon)(A^{(k)}-A_\infty^{(k)})\Delta_x^{(k)}-\gamma (I-\fulcon)(W^{(k)}-\fulcon)\vg^{(k)}.
\end{align}
Therefore, we can apply the Cauchy-Schwarz inequality on~\eqref{eq:con-2} and obtain that
\begin{align*}
    \norm{\Delta_x^{(k+1)}}_F^2 &\le 2\|(I-\fulcon)(A^{(k)}-A_\infty^{(k)})\Delta_x^{(k)}\|_F^2+2\gamma^2\|(I-\fulcon)(W^{(k)}-\fulcon)\vg^{(k)}\|_F^2\\
    &\le 2\|A^{(k)}-A_\infty^{(k)}\|_F^2\|\Delta_x^{(k)}\|_F^2+2\gamma^2 \|W^{(k)}-\fulcon\|_F^2\|\vg^{(k)}\|_F^2\\
    &\le 2C_A^2\beta^{2R_k}\|\Delta_x^{(k)}\|_F^2+2\gamma^2 C_W^2\beta^{2R_k}\|\vg^{(k)}\|_F^2\\
    &\le (2C_A^2\beta^{2R_k} + 4 \gamma^2L^2C_W^2 \beta^{2R_k})\|\Delta_x^{(k)}\|_F^2 + 8n\gamma^2 L C_W^2 \beta^{2R_k}(f(\bar{x}^{(k)})-f^*),
\end{align*}
where the second inequality is due to $\|I_n-\fulcon\|_2=1$, the third inequality uses the linear convergence rate in \eqref{eq:A-linear-convergence} and \eqref{eq-pulm-convergence-rate}, and the last inequality is from \eqref{eq:vg-bound}. Choose $R_k\ge \max\{\ln(C_W)/\ln(1/\beta),\ln(3C_A)/\ln(1/\beta)\}$ and $\gamma \le \frac{1}{6L}$ to bound the coefficient of $\|\Delta_x^{(k)}\|_F^2$ and the conclusion follows.
\end{proof}






\subsection{Proof of Lemma~\ref{lemma:absorbing-2}}\label{app:proof-absorbing-2}

\begin{proof}
Define $u_k:=1+\frac{c}{(1+k)^2}$, $U_k=\prod_{l=0}^{k}u_l$, and $U_{-1}:=1$ for convenience, which imply $u_k=U_k/U_{k-1}, \forall k\ge 0$. Then \eqref{eq:absorb-provided} can be transformed as 
\begin{equation}\label{eq:absorbing-u}
    U_{k}^{-1}S_k\le U_{k-1}^{-1}D_k-U_k^{-1}D_{k+1}+U_k^{-1}F_k,\quad \forall k\ge 0.
\end{equation}
Since $u_k\ge 1,\forall k\ge 0$, $U_k$ is monotonically increasing. Therefore, summing \eqref{eq:absorbing-u} from $k=0$ to $K-1$ and telescoping, we have
\begin{equation*}
U_{K-1}^{-1}\sum_{k=0}^{K-1}S_k\le \sum_{k=0}^{K-1}U_k^{-1}S_k\le D_0-U_{K-1}^{-1}D_K+\sum_{k=0}^{K-1} U_k^{-1}F_k\le D_0 + U_0^{-1}\sum_{k=0}^{K-1}F_k.
\end{equation*}
On one hand, $U_0=u_0=1+c$. On the other hand,
\[
\begin{aligned}
U_{K-1} &= \prod_{k=0}^{K-1}\Big(1+\frac{c}{(1+k)^2}\Big)=\exp\bigg(\sum_{k=0}^{K-1}\ln \Big(1+\frac{c}{(1+k)^2}\Big)\bigg)\\
&\le \exp\bigg(\sum_{k=0}^{K-1}\frac{c}{(1+k)^2}\bigg)\le \exp(2c).
\end{aligned}
\]
Substitute $U_0$ with its value and $U_{K-1}$ with its upper bound and we have the conclusion.
\end{proof}

\subsection{Proof of Theorem~\ref{thm:main}}\label{app:proof-main-thm}
\begin{proof}
Based on Lemma~\ref{lemma:descent}, \ref{lemma:consensus-iteration} and \ref{lemma:absorbing-2}, we obtain the inequality \eqref{eq:ready-to-prove-thm}. As Algorithm~\ref{alg:PWM-GD} is initialized at consensus, namely $\Delta_x^{(0)}=0$, the terms $\sum_{k=1}^{K-1}\|\Delta_x^{(k)}\|_F^2$ and \(\sum_{k=0}^{K-1}\|\Delta_x^{(k)}\|_F^2 \) on both sides are canceled. The non-negative terms $\sum_{k=0}^{K-1}\|\bar{g}^{(k)}\|^2$ and $\|\Delta_x^{(K)}\|_F^2$ can be directly omitted. And then the main inequality \eqref{ieq:main} follows. The choice of step size $\gamma$ and the inner iteration rounds $R_k$ are determined by Lemma~\ref{lemma:descent}, \ref{lemma:consensus-iteration}, and the sufficient choice when utilizing Lemma~\ref{lemma:absorbing-2}.
\end{proof}

\section{Experiment Details and Supplementary Experiments}\label{sec:exp-app}

\subsection{Settings of Network Topology}\label{sec:topo-setting}
Throughout our experiments, all the network topologies can be classified into two categories. The one contains purely random networks, and the other contains fixed latent topologies with possibilities of disconnection. For purely random networks, we only need a communication probability $p_c$, and each node $i$ is expected to send information to each node $j$ in each time interval $k$ with probability $p_c$. For networks with a latent topology, we first need a strongly connected directed network, which can be ring topology or a strongly connected directed network with a given sparsity rate. Next, each edge of the latent network can disappear in each time interval $k$ with a probability $p_d$. Note that the disconnections are reflected in out-degrees. This also produces a series of time-varying topologies.

In the experiments comparing \pushsum / \pushdg with PULM / PULM-DGD, we also considered the packet loss probability $p_t$ after transmission, before reception. This case is different from the case stated before where $p_d$ of disconnections occur, as packet losses cannot be 
anticipated before sending messages and are not reflected in out-degrees, which \pushsum / \pushdg cannot capture.

\subsection{Settings of Consensus Problem}
Consensus problem is to reach the average of a series of vectors $x_1,x_2,\dots,x_n\in\mathbb{R}^d$. Algorithm~\ref{alg:PWM} together with \pushsum \cite{nedic2017achieving} are designed to solve the consensus problem. To test the effectiveness of Algorithm~\ref{alg:PWM}, we define the following error metric
\begin{equation}
e^{(k)}:=\frac{\|\vz^{(k)}-\bar{\vx}\|_F}{\|\vx-\bar{\vx}\|_F},
\label{eq:x-error}
\end{equation}
where $\vz^{(k)},\,\vx$ are defined in Algorithm~\ref{alg:PWM} and $\bar{\vx}:=\fulcon \vx$.

\subsection{Settings of Logistic Regression}
Logistic regression problem considers dataset $\{x_s,y_s\}_{s=1}^S$, where $x_s\in\mathbb{R}^d,\,y_s\in\{+1,-1\},\,S$ is the number of total samples. In distributed context, we divide the dataset into $n$ sub-datasets $\{x_{i,s},y_{i,s}\}_{s=1}^{S_i},\,i=1,2,\dots,n$. And for each node with a local sub-dataset, we define the local objective function as
\[
f_i(w,b):=\frac{1}{S_i}\sum_{i=1}^{S_i}\log\Big(1+\exp\big(-y_{i,s}(x_{i,s}^\top w+b)\big)\Big)+\lambda \bigg(\frac{b^2}{1+b^2}+\sum_{j=1}^d\frac{w_j^2}{1+w_j^2}\bigg),
\]
where $w_j$ indicates the $j$-th element of vector $w\in\mathbb{R}^d$, $\lambda$ is the penalty parameter which controls the proportion of the non-convex penalty term. Throughout our experiments, we always chose $\lambda=0.1$. Finally, we define the global objective function $f(w,b):=\frac{1}{n}\sum_{i=1}^n f_i(w,b)$.

For experiments with synthetic data, we use the following data generation strategy: 1. Randomly choose global latent weight and bias $w^*, b^*$. 2. Given a heterogeneity coefficient $\sigma_h\ge 0$, generate each local latent weight and bias $w_i^*,\, b_i^*$ by sampling $w_i^*\sim \mathcal{N}(w^*,\sigma_h^2 I_d),\,b_i^*\sim\mathcal{N}(b^*,\sigma_h^2)$. 3. For each sample $s$ of each node $i$, randomly generate $x_{i,s}$, and compute $p_{i,s}=\sigma(x_{i,s}^\top w_i^*+b_i^*)$, where $\sigma(x):=1/(1+e^{-x})$ is the sigmoid function. Then determine $y_{i,s}=+1$ with probability $p_{i,s}$ and $y_{i,s}=-1$ with probability $1-p_{i,s}$.

In our experiments of logistic regression with synthetic date, we chose $n=10,d=30,S=1000,\sigma_h=0.1$.
For experiments with real data, we choose the UCI machine learning dataset ``Human Activity Recognition Using Smartphones\footnote{\url{https://archive.ics.uci.edu/dataset/240/human+activity+recognition+using+smartphones}}''. We divided the dataset into 10 subsets, and chose the class id 1 as the response variable.

\subsection{Settings of MNIST and CIFAR-10 Classification}\label{sec:setting-mnist-cifar}
The training of MNIST dataset was conducted on the LeNet neural network model; and meanwhile we chose ResNet-18 to handle the training of CIFAR-10 dataset. The training dataset can be divided either randomly or by labels. Dividing dataset by labels means arranging samples into ten nodes according to their ground truth label from 0 to 9. Throughout our all experiments on MNIST and CIFAR-10, we set the batch-size $b=100$.

\clearpage

\begin{figure}
    \centering
    \includegraphics[width=1\linewidth]{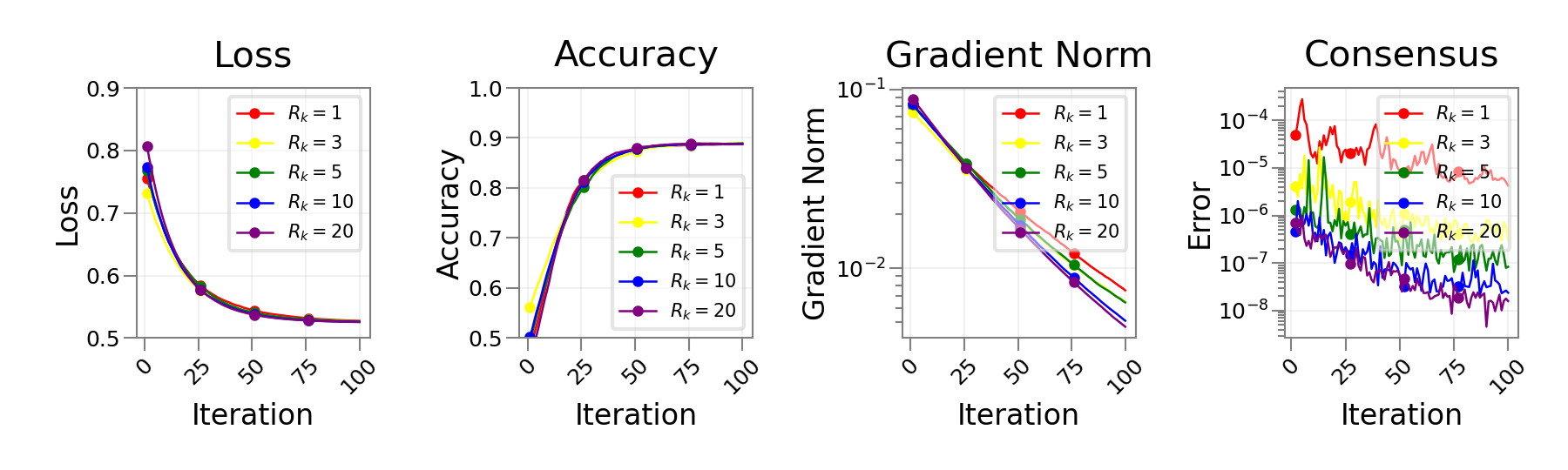}
    \vspace{-8mm}
    \caption{\small Effect of communication rounds $R_k$ in logistic regression with synthetic data}
    \label{fig:inner-logi-syn}
    \vspace{-8mm}
\end{figure}

\begin{figure}
    \centering
    \includegraphics[width=1\linewidth]{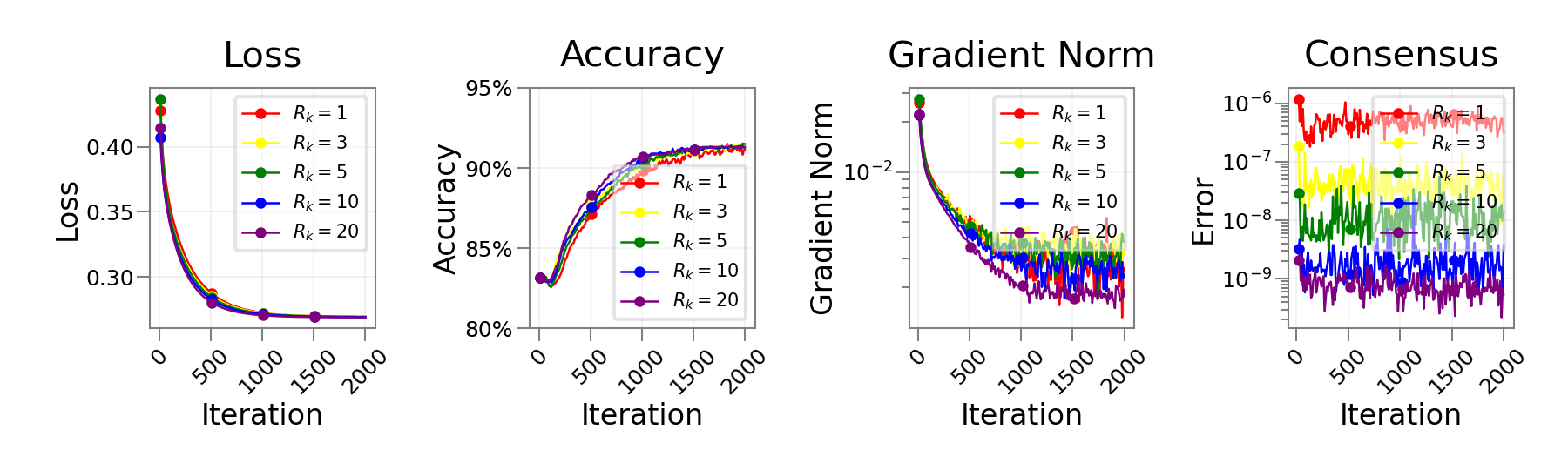}
    \vspace{-8mm}
    \caption{\small Effect of communication rounds $R_k$ in logistic regression with real data}
    \label{fig:inner-logi-real}
    \vspace{-8mm}
\end{figure}

\begin{figure}
    \centering
    \includegraphics[width=1\linewidth]{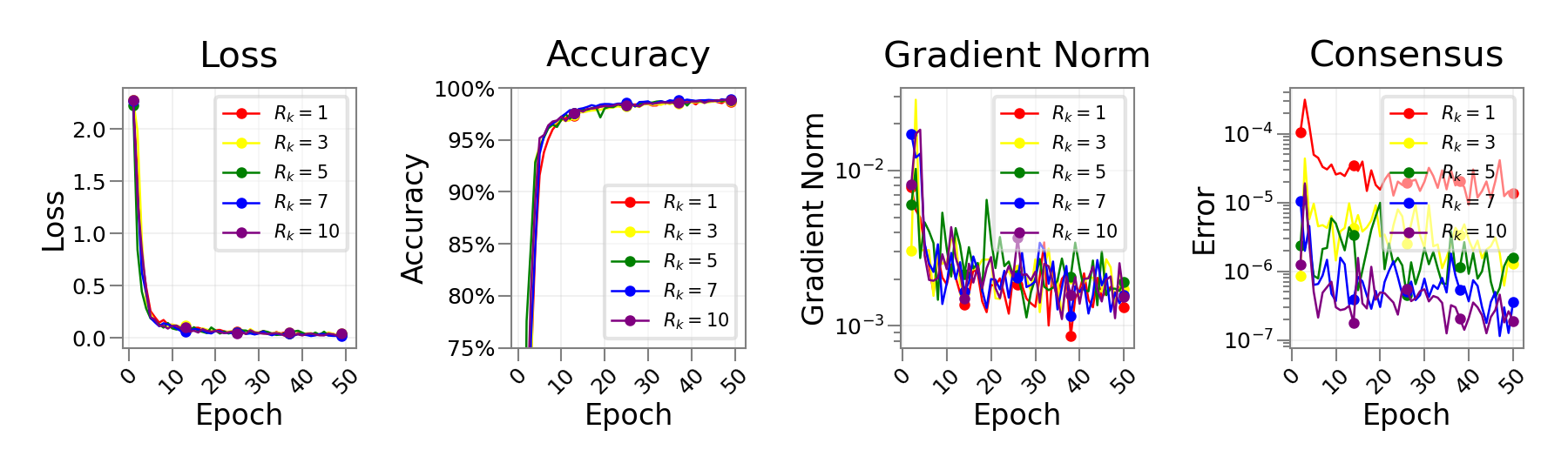}
    \vspace{-8mm}
    \caption{\small Effect of communication rounds $R_k$ in MNIST training}
    \label{fig:inner-mnist}
    \vspace{-8mm}
\end{figure}

\begin{figure}
    \centering
    \includegraphics[width=1\linewidth]{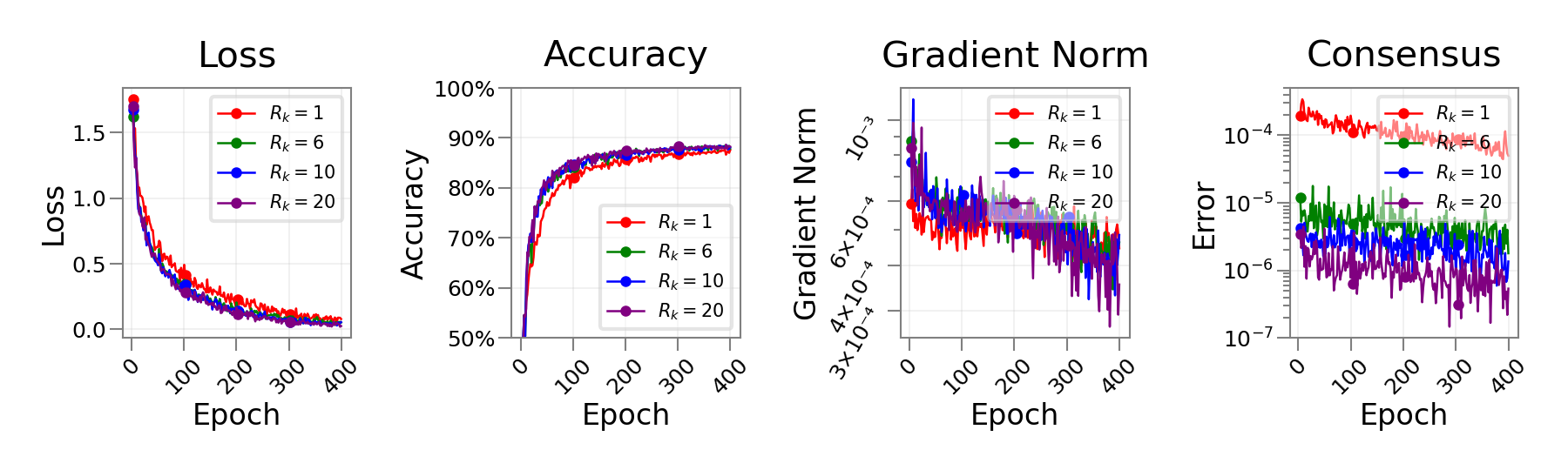}
    \vspace{-8mm}
    \caption{\small Effect of communication rounds $R_k$ in CIFAR-10 training}
    \label{fig:inner-cifar}
    \vspace{-8mm}
\end{figure}

\clearpage

\subsection{Details of Experiments Concerned with Different Inner Communication Rounds $R_k$}\label{sec:detail-diff-inner}

Figures~\ref{fig:inner-logi-syn}, \ref{fig:inner-logi-real}, \ref{fig:inner-mnist}, \ref{fig:inner-cifar} show the detailed results of experiments corresponding to Section~\ref{sec:diff-inner}. For all the four sets of experiments, we chose the time-varying topologies with a fixed latent strongly connected network, whose sparsity $s=0.3$, but suffered a disconnection rate $p_d=0.4$. We allocated samples randomly into 10 nodes for MNIST and CIFAR-10 training. Except for the logistic regression on the real dataset which used a learning rate of $0.01$, the rest of the problems all used a learning rate of $0.1$.







We defined the following metric to indicate the consensus error among all nodes:
\[
e^{(k)}:=\sqrt{\frac{1}{d}\sum_{j=1}^d \Big(\frac{1}{n}\sum_{i=1}^n (x^{(k)}_{i,j}- \bar{x}^{(k)}_{j})^2\Big)^2},
\]
where $\bar{x}_j^{(k)}:=\frac{1}{n}\sum_{i=1}^n x_{i,j}^{(k)}$. It can be noticed that the consensus error decreases as the rounds of inner communications increase, but not linearly. The contribution of the rounds of communications to error reduction become less obvious as the rounds increase. And for most non-convex problems, the consensus error is not decisive.

\subsection{Supplementary Experiments: Comparison with Push-DIGing on Real\\ Dataset} To further illustrate the effectiveness of our algorithm compared with Push-DIGing, we also conducted experiments of logistic regression on real dataset for comparison. The other settings were identical to those in Section~\ref{sec:push-comp}, while the learning rate was set as $0.01$ for the convergence on real dataset. The results are demonstrated in Figure~\ref{fig:compare-dig-real}. And the conclusion is the same as in Section~\ref{sec:push-comp}.

\begin{figure}
    \centering
    \includegraphics[width=0.75\linewidth]{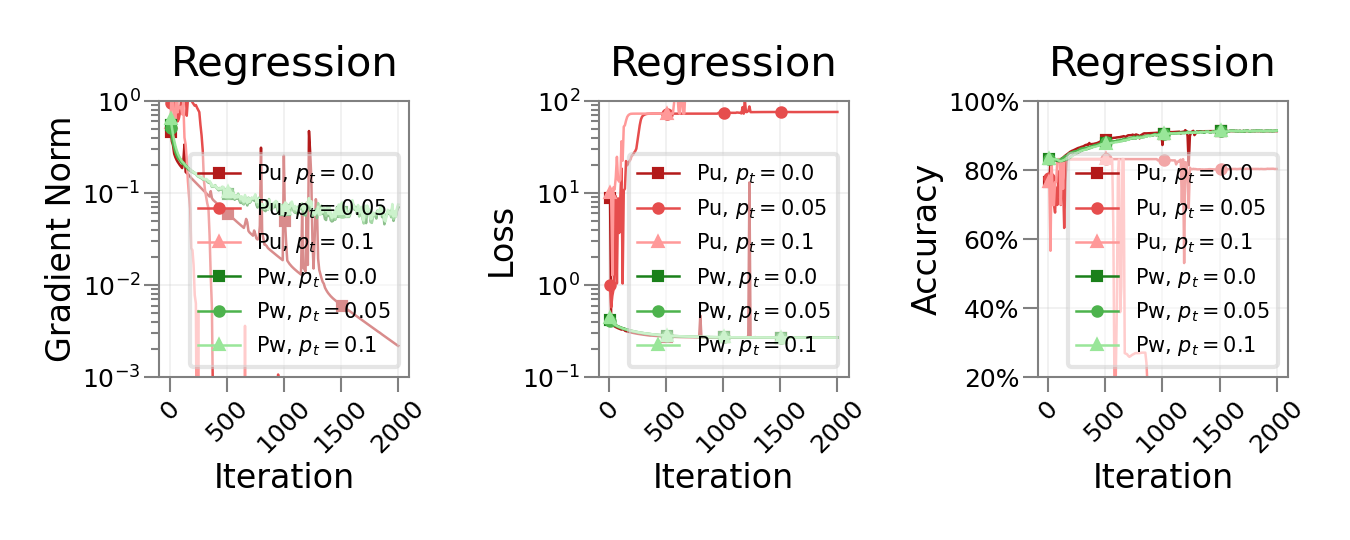}
    \caption{\small Comparison with Push-DIGing on real dataset}
    \label{fig:compare-dig-real}
\end{figure}

\subsection{Supplementary Experiments: Performances on Different Topologies}\label{app:performances-different-topologies}
To examine the effectiveness of our algorithms, we further conducted experiments on different topologies.

\begin{figure}
    \centering
    \includegraphics[width=1\linewidth]{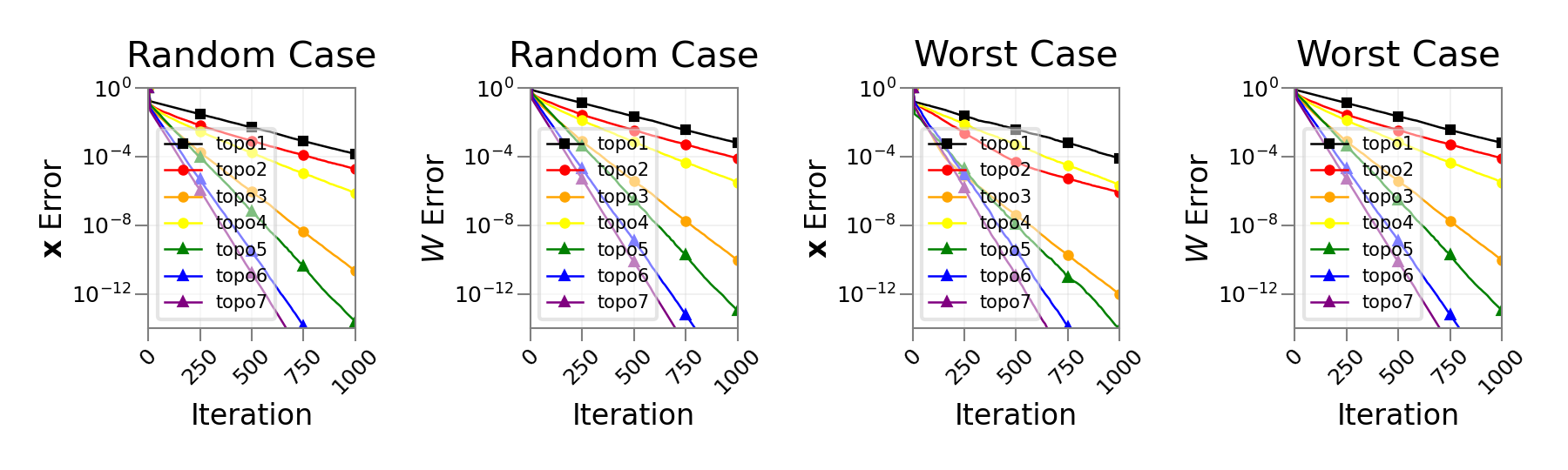}
    \caption{\small Consensus problems on different topologies}
    \label{fig:topo-consensus}
    \vspace{-3mm}
\end{figure}

Figure~\ref{fig:topo-consensus} shows the performance of Algorithm~\ref{alg:PWM} on different topologies. The number of nodes was $20$, and the seven topologies we chose were respectively: topo 1, ring topo with disconnection rate $0.2$; topo 2, fixed latent topo of sparsity $0.2$, with disconnection rate $0.2$; topo 3, fixed latent topo of sparsity $0.3$, with disconnection rate $0.2$; topo 4, fixed latent topo of sparsity $0.3$, with disconnection rate $0.4$; topo 5, random topo with connection rate $0.1$; topo 6, random topo with connection rate $0.2$; topo 7, random topo with connection rate $0.3$. For the data to be averaged, we considered two cases. The one used randomly generated data; while the other generated $n-1$ data randomly, but the last one significantly far from the cluster of the other nodes, which was to some degree a worst case. The metric to measure error of $\vx$ is defined in \eqref{eq:x-error}, and we were also interested in the error of $W^{(k)}$, namely $\|W^{(k)}-\fulcon\|_F$. 

Based on the results, we can draw two main conclusions: Algorithm~\ref{alg:PWM} is robust dealing with different cases to be averaged, as it performed similarly in random and worst cases. Besides, purely random topologies own better convergence properties than topologies with a fixed latent network, as purely random topologies may have a smaller $B$ or larger $\eta$ defined in Proposition~\ref{prop:positive}.

\begin{figure}
    \centering
    \includegraphics[width=1\linewidth]{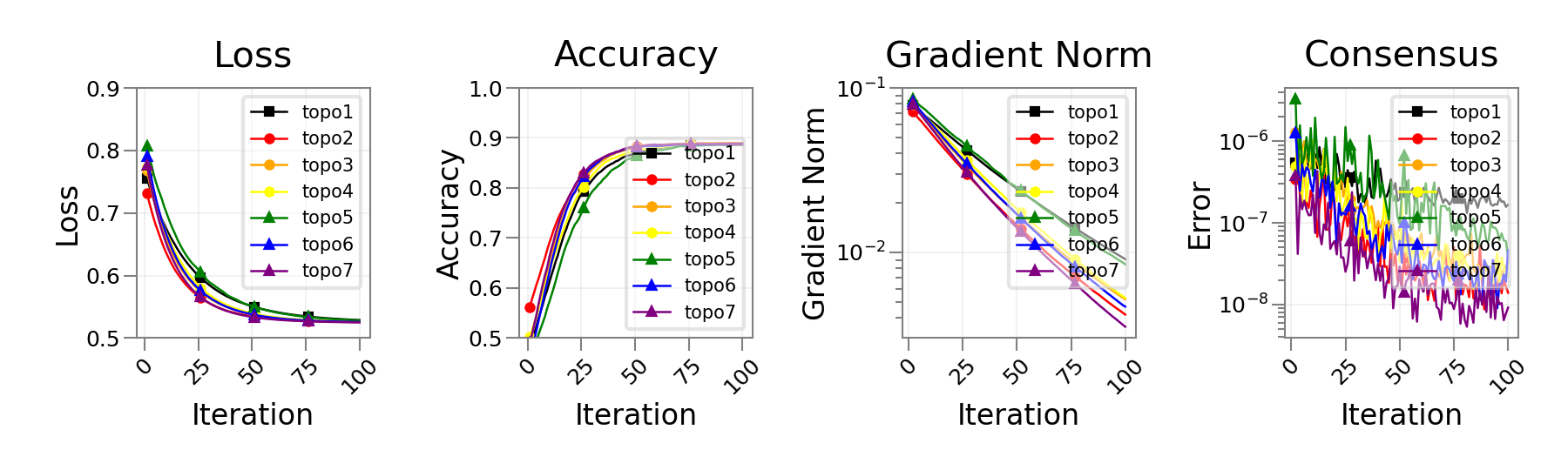}
    \caption{\small Logistic regression with synthetic data on different topologies}
    \label{fig:topo-logi-syn}
\end{figure}

\begin{figure}
    \centering
    \includegraphics[width=1\linewidth]{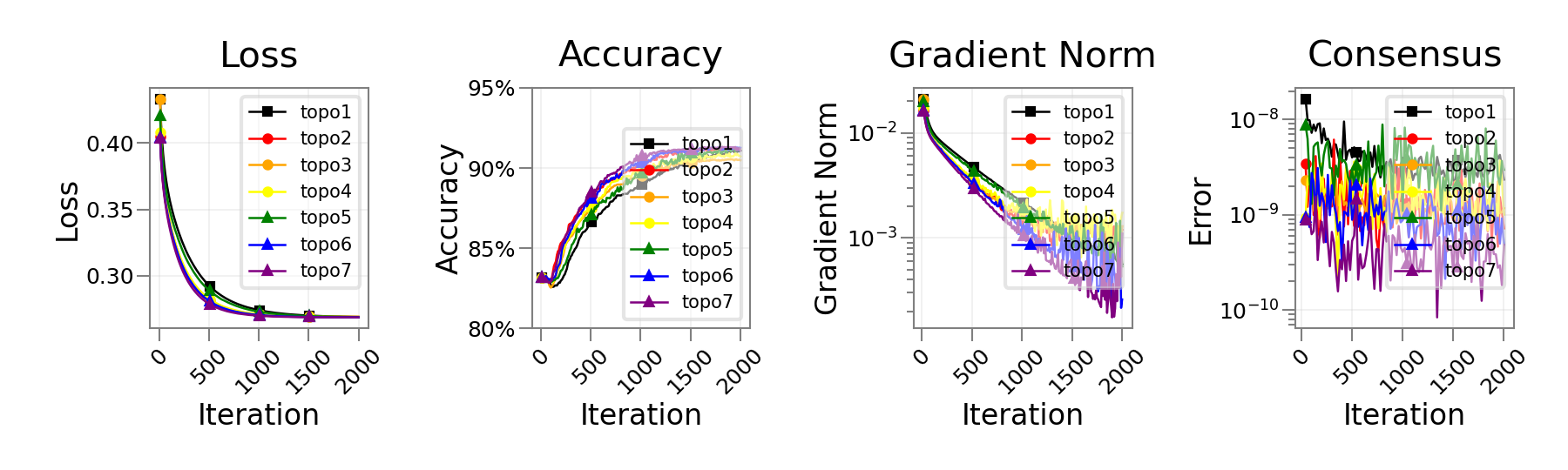}
    \vspace{-8mm}
    \caption{\small Logistic regression with real data on different topologies}
    \label{fig:topo-logi-real}
    \vspace{-3mm}
\end{figure}

Figure~\ref{fig:topo-logi-syn} and \ref{fig:topo-logi-real} show the results of Algorithm~\ref{alg:PWM-GD} on logistic regression problem using synthetic data and real data respectively. The seven topologies used for them were identical to those in Figure~\ref{fig:topo-consensus}. The inner communication rounds $R_k$ were always $10$. Learning rates were $0.1$ for synthetic dataset and $0.01$ for real dataset. Consistent with the former experiments, topologies owning a better convergence property led to a smaller consensus error. But on the other hand, a rough accuracy of consensus is sufficient for non-convex problems, when better consensus brings no significant benefits.

\begin{figure}
    \centering
    \includegraphics[width=1\linewidth]{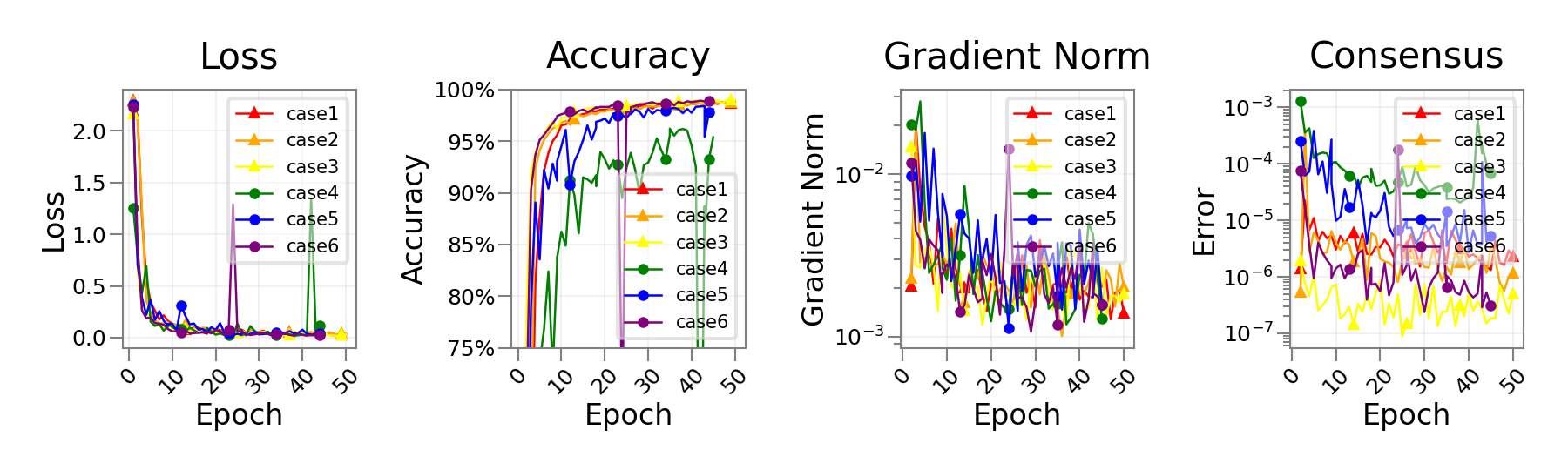}
    \vspace{-8mm}
    \caption{\small MNIST training on different topologies}
    \label{fig:topo-mnist}
    \vspace{-3mm}
\end{figure}

Figure~\ref{fig:topo-mnist} shows the results of Algorithm~\ref{alg:PWM-GD} on MNIST training. Learning rate was set as $0.1$. There were six cases which we considered: case1, random data distribution on ring latent topology; case2, random data distribution on fixed latent topology; case3, random data distribution on random topologies; case 4, label-based distribution on ring latent topology; case5, label-based distribution on fixed latent topology; case6, label-based distribution on random topologies. Data were always distributed into ten nodes, and the two data distribution strategies can be referred to Section~\ref{sec:setting-mnist-cifar}. The three topology types used were: ring latent topology with a disconnection rate $0.2$; fixed latent topology with sparsity $0.3$ and with a disconnection rate $0.4$; random topologies with connection rate $0.3$. When allocating data by their labels, we should notice that different nodes might not own the same number of data. Hence, the process of epochs among different nodes might not be synchronous. We took the number of epochs within the slowest node as the indicator of the global number of epochs. Based on the results, we draw the conclusion that the properties of topologies and the distribution strategies of dataset play a more important role in the convergence of the whole problem than the inner communication rounds. However, in fairly mild settings, Algorithm~\ref{alg:PWM-GD} is still able to show good performance without a large overhead of communication or computation.

\end{document}